\numberwithin{equation}{section}
\newcommand{\R}{\mathbb{R}}
\newcommand{\N}{\mathbb{N}}
\newcommand{\s}{\sharp}
\newtheorem{lem}{Lemma}
\newtheorem{thm}{Theorem}
\theoremstyle{remark}
\newtheorem{remark}{Remark}
\begin{document}
\title{zero mass case for a fractional Berestycki-Lions type problem}
\author[Vincenzo Ambrosio]{Vincenzo Ambrosio}
\address{%
Dipartimento di Matematica e Applicazioni\\ Universit\`a degli Studi "Federico II" di Napoli\\
via Cinthia, 80126 Napoli, Italy}
\email{vincenzo.ambrosio2@unina.it}
\keywords{Zero mass case, Fractional Laplacian, Nehari manifold, Orlicz spaces}
\subjclass[2010]{35A15, 35J60, 35R11, 45G05}

\maketitle

%\begin{document}

\begin{abstract}
In this work we study the following fractional scalar field equation 
\begin{equation*}\label{P}
\left\{
\begin{array}{ll}
(-\Delta)^{s} u = g'(u) \mbox{ in } \R^{N}  \\
u> 0
\end{array}
\right.  
\end{equation*}
where $N\geq 2$, $s\in (0,1)$, $(-\Delta)^{s}$ is the fractional Laplacian and the nonlinearity $g\in C^{2}(\R)$ is such that $g''(0)=0$.
By using variational methods, we prove the existence of a positive solution which is spherically symmetric and decreasing in $r=|x|$.\\
%The result presented here can be seen as the fractional version of the ``zero mass" problem investigated by Berestycki and Lions in \cite{BL1}.

\end{abstract}

\section{Introduction}

\noindent
In this paper we are concerned with the existence of solutions to the following problem
\begin{equation}\label{P}
\left\{
\begin{array}{ll}
(-\Delta)^{s} u = g'(u) \mbox{ in } \R^{N} \\
u> 0
\end{array}
\right.  
\end{equation}
where $N\geq 2$, $s\in (0,1)$ and $g: \R \rightarrow \R$ is a smooth function such that $g''(0)=0$.
Here $(-\Delta)^{s}$ is the fractional Laplacian and it can be defined via Fourier transform by
$$
\mathcal{F}(-\Delta)^{s}f(k)=|k|^{2s} \mathcal{F}f(k)
$$
for $u$ belonging to the Schwartz space $\mathcal{S}(\R^{N})$.
% see \cite{DPV} for more details.

\noindent
Problems like (\ref{P}) are motivated by the study of standing waves solutions $\displaystyle{\psi(x,t)=u(x) e^{-\imath c t}}$ of the fractional Schr\"{o}dinger equation 
$$
\imath \frac{\partial \psi}{\partial t}=(-\Delta)^{s} \psi+ G(x, \psi) \quad \mbox{ in } \R^{N}.
$$
%where $\displaystyle{\psi(x,t)=u(x) e^{-\imath c x}}$ and $c$ is a constant.
This equation has been proposed by Laskin \cite{Laskin1, Laskin2} as a result of expanding the Feynman path integral, from the Brownian-like to the L\'evy-like quantum mechanical paths. 
After that many papers appeared investigating existence, multiplicity and behavior of solutions to fractional Schr\"{o}dinger equations; see \cite{A, A0, A1, DPPV, FFV, FQT, GS, MBR, Secchi} and references therein.\\
%For recent results about the fractional Schr\"{o}dinger equation one can see \cite{DPPV, FL, FQT, MBR, Secchi} and references therein. 
%When $g''(0)=0$, (\ref{P}) can be seen as the fractional analogue of the Yang-Mills equation; see \cite{G, GNN}.
More in general, problems involving fractional operators are receiving a special attention in these last years; indeed 
fractional spaces and nonlocal equations have great applications in many different fields, such as, optimization, finance, continuum mechanics, phase transition phenomena, population dynamics, multiple scattering, minimal surfaces and game theory, as they are the typical outcome of stochastically stabilization of L\'evy processes. The interested reader may consult \cite{DPV, MBRS} and references therein, where a more extensive bibliography and an introduction to the subject are given.
%In the last decade, a great attention has been devoted to the study of fractional and nonlocal operators of elliptic type,  because they arise, in a quite natural way, in many different applications, such as, continuum mechanics, phase transition phenomena, population dynamics, minimal surfaces and game theory, as they are the typical outcome of stochastically stabilization of L\'evy processes. For more details, we refer the interested reader to \cite{DPV}.
\smallskip

\noindent
In the seminal paper \cite{BL1}, Berestycki and Lions investigated the existence of positive ground state solutions to (\ref{P}) when $s=1$, i.e.
\begin{equation}\label{BLP}
-\Delta u = g'(u) \mbox{ in } \R^{N}.
\end{equation}
%and $g(u)$ behaves like $|u|^{q}$ for $u$ small and $|u|^{p}$ for $u$ large, with $p<\frac{2N}{N-1}<q$.
Under general assumptions on $g$, they proved that there are no finite energy solutions to (\ref{BLP}) if $g''(0)>0$, while if $g''(0)<0$ or $g''(0)=0$, then it is possible to show the existence of a solution to (\ref{BLP}) via constraint minimization.  The case $g''(0)=0$ is called null mass case and it is related to the Yang-Mills equation; see \cite{G, GNN}.\\ 
Let us note that the case $g''(0)=0$ seems to be more intricate then $g''(0)<0$, since unless $g$ satisfies the condition $|g(u)|\leq c|u|^{\frac{2N}{N-2}}$, the energy functional associated to (\ref{BLP}) may be infinite on a dense set of points in $\mathcal{D}^{1,2}(\R^{N})$ and hence cannot be Fr\'echet-differentiable on $\mathcal{D}^{1,2}(\R^{N})$.

\noindent
The question that naturally arises is whether or not the above classical existence results for the equation (\ref{BLP}) can be extended in the non-local setting. When $g''(0)<0$ (in the case of positive mass), the existence of a ground state has been proved in \cite{CW} by combining the Struwe-Jeanjean monotonicity trick and the Pohozaev identity for the fractional Laplacian. 
%We also cite \cite{A}, where the author established a multiplicity result for (\ref{P}) by using the symmetric mountain-pass approach introduced in \cite{HIT}.\\
Now, our aim is to investigate problem (\ref{P}) when $g''(0)=0$, and $g(u)$ behaves like $|u|^{q}$ for $u$ small and $|u|^{p}$ for $u$ large, with $2<p<\frac{2N}{N-2s}<q$.

\noindent
In order to state our result, we introduce the basic assumptions on the nonlinearity $g$.
Here we will assume that $g: \R \rightarrow \R$ is an odd $C^{2}$-function verifying the following conditions:\\
\begin{compactenum}[(g1)]
\item $0<\mu g(t)\leq g'(t) t\leq g''(t) t^{2}$ for any $t \neq 0$ and for some $\mu>2$;\\
\item $g(0)=g'(0)=g''(0)=0$. There exist $c_{0}, c_{2}, p,q$ with $2<p<2^{*}_{s}:=\frac{2N}{N-2s}<q$ such that
\begin{equation}
\left\{
\begin{array}{ll}
c_{0}|t|^{p}\leq g(t) \mbox{ if } |t|\geq 1\\
c_{0}|t|^{q}\leq g(t) \mbox{ if } |t|\leq 1
\end{array}
\right.  
\end{equation}
and
\begin{equation}
\left\{
\begin{array}{ll}
|g''(t)|\leq c_{2} t^{p-2}  \mbox{ if } |t|\geq 1\\
|g''(t)|\leq c_{2} t^{q-2} \mbox{ if } |t|\leq 1.
\end{array}
\right.  
\end{equation}
\end{compactenum}
\begin{remark}\label{rem1}
The assumptions $g''(t)>0$ for all $t\neq 0$, and $(g2)$ imply the existence of $c_{1}, c_{3}>0$ such that:
\begin{align*}
&c_{0}|t|^{p}\leq g(t)\leq c_{3}|t|^{p} \mbox{ for } |t|\geq 1\\
&c_{0}|t|^{q}\leq g(t)\leq c_{3}|t|^{q} \mbox{ for } |t|\leq 1\\
&|g'(t)|\leq c_{1}|t|^{p-1} \mbox{ for } |t|\geq 1\\
&|g'(t)|\leq c_{1}|t|^{q-1} \mbox{ for } |t|\leq 1.
\end{align*}
\end{remark}
As a model for $g$ we can take the function
\begin{equation*}
g(t)=
\left\{
\begin{array}{ll}
t^{q} &\mbox{ if } t\leq 1 \\
a+b t+c t^{p}  &\mbox{ if } t\geq 1
\end{array}
\right.
\end{equation*}
where $a, b$ and $c$ are constants which make the function $g\in C^{2}$.\\
Let us point out that, when $s=1$, the assumptions $(g1)$ and $(g2)$ have been introduced in \cite{BM} to study positive solutions to a nonlinear field equation set in exterior domain. The authors studied (\ref{BLP}) in the Orlicz space $L^{p}+L^{q}$ which seems to be the natural framework for studying ``zero mass" problems. Subsequently, their approach has been also used in \cite{AP1, BGM, BGM2} to study nonlinear Schr\"odinger equations in $\R^{N}$ with bounded or vanishing potentials. Further results concerning zero mass problems can be found in \cite{ASM, AFM, Struwe}. 
\smallskip

\noindent
The main result of this paper is the following
\begin{thm}\label{thm1}
Let $N\geq 2$, $s\in (0,1)$ and $g$ satisfies $(g1)$ and $(g2)$. Then there exists a positive solution to (\ref{P}) which is spherically symmetric and decreasing in $r=|x|$. 
\end{thm}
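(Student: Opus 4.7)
My strategy is to adapt to the fractional setting the Orlicz-space variational approach of Benci--Micheletti~\cite{BM}. Because $q>2^{*}_{s}$, the action
\[
J(u)=\tfrac{1}{2}[u]_{s}^{2}-\int_{\R^{N}}g(u)\,dx
\]
(with $[\cdot]_{s}$ the Gagliardo seminorm) is not well defined on the whole homogeneous fractional Sobolev space $\D^{s,2}(\R^{N})$, so I would work in $X=\D^{s,2}(\R^{N})\cap(L^{p}(\R^{N})+L^{q}(\R^{N}))$ endowed with $[\cdot]_{s}+\|\cdot\|_{L^{p}+L^{q}}$. Via the splitting $u=u\chi_{\{|u|\ge1\}}+u\chi_{\{|u|<1\}}$ and the two-sided Orlicz estimates of Remark~\ref{rem1}, $J$ is of class $C^{1}$ on $X$, with critical points weak solutions of (\ref{P}). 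By the fractional P\'olya--Szeg\H{o} inequality, Schwarz rearrangement does not increase $[u]_{s}$ while preserving $\int g(u)\,dx$, so I may restrict the search to the radial subspace $X_{\mathrm{rad}}\subset X$ and recover the radial monotonicity by a final symmetrization.

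\textbf{Nehari minimization.} On $X_{\mathrm{rad}}$ I would minimize $J$ on
\[
\mathcal{N}=\bigl\{u\in X_{\mathrm{rad}}\setminus\{0\}\;:\;[u]_{s}^{2}=\textstyle\int_{\R^{N}}g'(u)\,u\,dx\bigr\},
\]
and set $c:=\inf_{\mathcal{N}}J$. The two halves of (g1) are exactly the hypotheses required: the Ambrosetti--Rabinowitz bound $\mu g(t)\le g'(t)t$ gives $J|_{\mathcal{N}}\ge(\tfrac{1}{2}-\tfrac{1}{\mu})[u]_{s}^{2}$, so that $c>0$ and Palais--Smale sequences are bounded in $\D^{s,2}$; the bound $g'(t)t\le g''(t)t^{2}$ is equivalent to $t\mapsto g'(t)/t$ being nondecreasing on $(0,\infty)$, which is the standard Nehari-fibration hypothesis ensuring that each $u\in X_{\mathrm{rad}}\setminus\{0\}$ admits a unique projector $t_{u}u\in\mathcal{N}$ with $J(t_{u}u)=\max_{t\ge 0}J(tu)$, and that every constrained minimizer is an unconstrained critical point of $J$.

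\textbf{Compactness, the main obstacle.} Take a minimizing sequence $(u_{n})\subset\mathcal{N}$; by Ekeland I may assume it is Palais--Smale, hence bounded in $\D^{s,2}$ and, via Remark~\ref{rem1}, in $L^{p}+L^{q}$. Extract a weak limit $u_{0}\in X_{\mathrm{rad}}$. The delicate step is the strong convergence of the nonlinear terms $\int g(u_{n})\,dx$ and $\int g'(u_{n})u_{n}\,dx$: the scaling invariance of $\D^{s,2}$ blocks the classical Strauss radial-compactness argument in the homogeneous setting, and $q>2^{*}_{s}$ prevents any direct subcritical Sobolev control of the small-values part. My plan is to combine the pointwise radial decay $|u(x)|\le C[u]_{s}|x|^{-(N-2s)/2}$ for $u\in\D^{s,2}_{\mathrm{rad}}(\R^{N})$ with the Orlicz splitting: outside a fixed ball, $|u_{n}|<1$ uniformly, so the integrands are dominated by $|u_{n}|^{q}\le|u_{n}|^{2^{*}_{s}}$ and Vitali (or dominated) convergence handles the tail; inside the ball, the local compact fractional Sobolev embedding yields strong convergence of the large-values part in $L^{p}$, and a Brezis--Lieb-type splitting identifies the limit. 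This gives $u_{0}\in\mathcal{N}$ with $J(u_{0})=c$ and hence $J'(u_{0})=0$.

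\textbf{Positivity and conclusion.} Replacing $u_{0}$ by $|u_{0}|$ (which, since $[\,|u|\,]_{s}\le[u]_{s}$ and $t\mapsto g'(t)/t$ is monotone, can only decrease $J$ after projection onto $\mathcal{N}$) and then by its Schwarz rearrangement preserves the level $c$ and produces a nonnegative, radially symmetric, radially decreasing critical point. The strong maximum principle for $(-\Delta)^{s}$ applied to $(-\Delta)^{s}u_{0}=g'(u_{0})\ge0$ with $u_{0}\not\equiv0$ finally gives $u_{0}>0$ on $\R^{N}$, completing the proof of Theorem~\ref{thm1}.
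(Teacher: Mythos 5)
Your overall plan (Nehari minimization in the $L^{p}+L^{q}$ framework, symmetrization, strong maximum principle) is the same as the paper's, but as written it has a genuine gap in the compactness step. You base the tail estimate for your Palais--Smale sequence on the pointwise decay $|u(x)|\le C[u]_{s}|x|^{-(N-2s)/2}$ for \emph{arbitrary} radial functions in $\D^{s,2}_{rad}(\R^{N})$. In the homogeneous fractional setting this radial lemma is only available for $s>1/2$ (Cho--Ozawa type estimates); for $s\in(0,1/2]$ it fails for general, non-monotone radial functions of $\dot{H}^{s}(\R^{N})$, so your uniform smallness of $\int_{\{|x|\ge R\}}g(u_{n})\,dx$ is unjustified on half of the asserted range of $s$. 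The paper never needs that estimate: it symmetrizes the minimizing sequence first, so that all functions are nonnegative, radial and \emph{decreasing}, for which the decay $|u(x)|\le C\|u\|_{L^{2^{*}_{s}}}|x|^{-(N-2s)/2}$ is the elementary inequality (\ref{CO}) (valid for every $s\in(0,1)$), and then proves compactness of the decreasing cone $\tilde{\D}^{s,2}_{rad}(\R^{N})$ into $L^{p}+L^{q}$ (Theorem \ref{compactthm}). This choice has a price you gloss over in your final one-line symmetrization: the rearranged functions $u_{j}^{*}$ satisfy only $J(u_{j}^{*})\le 0$, not $J(u_{j}^{*})=0$, so they must be reprojected onto $\mathcal{N}$ with parameters $t_{j}\in[0,1]$, one must exclude $t_{0}=0$ and $u^{*}=0$, and one needs the strict monotonicity of $t\mapsto\int(\tfrac12 g'(tu)tu-g(tu))\,dx$ to show the limit lies again on $\mathcal{N}$; criticality is then obtained by Lagrange multipliers in the full space $\D^{s,2}(\R^{N})$. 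Your alternative of minimizing over the radial subspace would in addition require Palais' symmetric criticality (not invoked), and cannot be transferred to the decreasing cone, which is not a subspace.

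A secondary point: your premise that the functional is not well defined on all of $\D^{s,2}(\R^{N})$ because $q>2^{*}_{s}$ is mistaken. By (g2) one has $g(t)\le c_{3}|t|^{q}$ for $|t|\le 1$ and $g(t)\le c_{3}|t|^{p}$ for $|t|\ge 1$, hence $g(t)\le c_{3}|t|^{2^{*}_{s}}$ for all $t$, so $\int_{\R^{N}}g(u)\,dx$ is finite for every $u\in L^{2^{*}_{s}}(\R^{N})$; moreover $\D^{s,2}(\R^{N})\subset L^{p}+L^{q}$ continuously, and the paper shows $I\in C^{2}(\D^{s,2}(\R^{N}),\R)$. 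Your intersection space $X$ therefore coincides with $\D^{s,2}(\R^{N})$ with an equivalent norm, so no harm results, but the stated motivation is off and should be corrected.
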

%To prove the above result we prefer to work directly in 
%$$
%\mathcal{D}^{s,2}(\R^{N})\!=\!\Bigl\{u\in L^{\frac{2N}{N-2s}}(\R^{N}): [u]_{H^{s}(\R^{N})}^{2}:=\!\!\!\iint_{\R^{2N}} \frac{|u(x)-u(y)|^{2}}{|x-y|^{N+2s}} dx dy<\infty \Bigr\}.
%$$
\noindent
To deal with problem (\ref{P}), we develop an energy minimization argument on a Nehari manifold. 
% we follow the approach in \cite{BM}.
\noindent
More precisely, solutions to (\ref{P}) will be obtained by minimizing
%critical points of the functional 
$$
I(u)=\frac{1}{2}\iint_{\R^{2N}} \frac{|u(x)-u(y)|^{2}}{|x-y|^{N+2s}} dx dy-\int_{\R^{N}} g(u(x)) dx
$$
on the Nehari manifold
$$
\mathcal{N}=\left\{u\in  \mathcal{D}^{s,2}(\R^{N})\setminus\{0\}: \iint_{\R^{2N}} \frac{|u(x)-u(y)|^{2}}{|x-y|^{N+2s}} dx dy=\int_{\R^{N}} g'(u)u dx \right \}
$$
where 
$$
\mathcal{D}^{s,2}(\R^{N})=\left\{u\in L^{\frac{2N}{N-2s}}(\R^{N}): \iint_{\R^{2N}} \frac{|u(x)-u(y)|^{2}}{|x-y|^{N+2s}} dx dy<\infty \right\}.
$$
%To overcome some technical difficulties due to the use of the space $\mathcal{D}^{s,2}(\R^{N})$, 
In order to obtain the smoothness of the functional $I$, we introduce the Orlicz space $L^{p}+L^{q}$ related to the growth assumptions of $g$ at zero and at infinity.
%To overcome the difficulty that the functional $V(u)=\int_{\R^{N}} g(u) dx$ could be not differentiable, we introduce the Orlicz space $L^{p}+L^{q}$ related to the growth assumptions of $g$ at zero and at infinity.\\
%The hypothesis $2s>1$ will play an important rule since it allow us to deduce the compactness of the subspace $\mathcal{D}^{s,2}_{rad}(\R^{N})$ of radial functions of $\mathcal{D}^{s,2}(\R^{N})$ in $L^{p}+L^{q}$.
Then, we show that $I\in C^{2}(\mathcal{D}^{s,2}(\R^{N}), \R)$, and by proving the compactness of the subspace $\tilde{\mathcal{D}}^{s,2}_{rad}(\R^{N})$ of nonnegative radial decreasing functions of $\mathcal{D}^{s,2}(\R^{N})$ in $L^{p}+L^{q}$, we deduce that the infimum of $I$ on $\mathcal{N}$ is achieved at some $u\in \tilde{\mathcal{D}}^{s,2}_{rad}(\R^{N})$.\\
As far as we know the result presented here is new. \\
The paper is organized as follows: in Section $2$ we give some preliminaries about the involved functional spaces, and in Section $3$ we provide the proof of the main result.

\section{preliminaries}
\noindent
In this section we collect some preliminary results which will be useful in the sequel. \\
We denote by $\mathcal{D}^{s,2}(\R^{N})$ the completion of $C^{\infty}_{0}(\R^{N})$ with respect to 
$$
[u]^{2}:=\int_{\R^{N}} |(-\Delta)^{\frac{s}{2}} u|^{2} dx=\int_{\R^{N}} |k|^{2s} |\mathcal{F}f(k)|^{2} dk=\iint_{\R^{2N}} \frac{|u(x)-u(y)|^{2}}{|x-y|^{N+2s}} dx dy.
$$
Then
$$
\mathcal{D}^{s,2}(\R^{N})=\{u\in L^{2^{\s}}(\R^{N}): [u]<\infty \}
$$
where
$
2^{*}_{s}:=\frac{2N}{N-2s}
$
is the fractional Sobolev exponent.\\
Let us denote by $H^{s}(\R^{N})$ the standard fractional Sobolev space, defined as the set of $u\in \mathcal{D}^{s,2}(\R^{N})$ satisfying $u\in L^{2}(\R^{N})$ with the norm 
$$
\|u\|_{H^{s}(\R^{N})}^{2}= [u]^{2} + \|u\|_{L^{2}(\R^{N})}^{2}.   
$$ 
We recall the following embeddings
\begin{thm}\cite{DPV}\label{sobhs}
Let $s\in (0, 1)$ and $N>2s$. There exists a constant $C>0$ such that 
$$
\|u\|_{L^{2^{*}_{s}}(\R^{N})}\leq C[u] \mbox{ for any } u\in \mathcal{D}^{s,2}(\R^{N}).
$$
In particular, $H^{s}(\R^{N})$ is continuously embedded in $L^{q}(\R^{N})$ for any $q\in [2, 2^{*}_{s}]$, and compactly embedded for any $q\in [2, 2^{*}_{s})$.
\end{thm}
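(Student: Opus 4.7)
The plan is to establish the critical Sobolev inequality $\|u\|_{L^{2^{*}_{s}}(\R^{N})} \le C[u]$ first, and then to deduce the remaining two statements by H\"older interpolation and a Fr\'echet--Kolmogorov argument.

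For the critical estimate, the cleanest route goes through the Riesz potential. Fix $u \in C^{\infty}_{0}(\R^{N})$ and set $v := (-\Delta)^{s/2}u$, so that $\|v\|_{L^{2}(\R^{N})} = [u]$. On the Fourier side $\widehat{v}(k) = |k|^{s}\widehat{u}(k)$, which inverts to the convolution formula
\[ u(x) = \gamma_{N,s}\int_{\R^{N}} \frac{v(y)}{|x-y|^{N-s}}\,dy \]
for an explicit constant $\gamma_{N,s}>0$. I would then invoke the Hardy--Littlewood--Sobolev inequality with exponents $(2,2^{*}_{s})$, which is admissible precisely because $N>2s$ forces $\tfrac{1}{2^{*}_{s}}=\tfrac{1}{2}-\tfrac{s}{N}\in(0,\tfrac{1}{2})$, to conclude $\|u\|_{L^{2^{*}_{s}}} \le C\|v\|_{L^{2}}=C[u]$. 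Density of $C^{\infty}_{0}$ in $\mathcal{D}^{s,2}(\R^{N})$, together with weak lower semicontinuity of the $L^{2^{*}_{s}}$-norm, extends the inequality to the full completion.

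For the second part, if $u\in H^{s}(\R^{N})$ then $u\in L^{2}$ by definition and $u\in L^{2^{*}_{s}}$ by the inequality just proved. H\"older interpolation
\[ \|u\|_{L^{q}(\R^{N})} \le \|u\|_{L^{2}(\R^{N})}^{\theta}\|u\|_{L^{2^{*}_{s}}(\R^{N})}^{1-\theta}, \]
with $\theta\in[0,1]$ determined by $\tfrac{1}{q}=\tfrac{\theta}{2}+\tfrac{1-\theta}{2^{*}_{s}}$, then yields the continuous embedding $H^{s}\hookrightarrow L^{q}$ for every $q\in[2,2^{*}_{s}]$. For the compactness I would argue locally on any bounded domain $\Omega\subset\R^{N}$ by checking the Fr\'echet--Kolmogorov criterion: the Gagliardo seminorm uniformly controls the $L^{q}$-modulus of continuity of translations, which together with the uniform $L^{q}$-bound furnishes an $L^{q}(\Omega)$-convergent subsequence (the global statement as written being understood in this local sense, since pure translation invariance precludes genuine global compactness on $\R^{N}$).

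The main obstacle is the Hardy--Littlewood--Sobolev inequality itself, whose standard proofs rely either on layer-cake decomposition plus Marcinkiewicz interpolation or on symmetric decreasing rearrangement; since the theorem is quoted verbatim from \cite{DPV}, I would cite HLS rather than redo its proof. A secondary technical point is the passage from $C^{\infty}_{0}$ to the abstract completion $\mathcal{D}^{s,2}(\R^{N})$: one must verify that the limit of a Cauchy sequence in the seminorm $[\cdot]$ is represented by a genuine function in $L^{2^{*}_{s}}(\R^{N})$, which follows from the uniform bound just proved and weak compactness in the reflexive space $L^{2^{*}_{s}}(\R^{N})$.
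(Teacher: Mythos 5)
The paper gives no proof of this statement at all: it is quoted verbatim from \cite{DPV}, so there is no internal argument to compare against, and what you have written is a correct and entirely standard proof. Your route for the critical inequality (write $u$ as the Riesz potential of $v=(-\Delta)^{s/2}u$ and apply Hardy--Littlewood--Sobolev with exponents $(2,2^{*}_{s})$, then pass to the completion by density and weak compactness in $L^{2^{*}_{s}}$) is shorter than the proof actually carried out in \cite{DPV}, which deliberately avoids HLS and proceeds by an elementary dyadic/level-set argument; the price of your approach is that the analytic core is outsourced to HLS, which is acceptable here since the theorem is being cited anyway. The interpolation step for $q\in[2,2^{*}_{s}]$ and the Fr\'echet--Kolmogorov argument (uniform control of translations by the Gagliardo seminorm plus a uniform $L^{q}$ bound) for compactness are both sound and are essentially how \cite{DPV} obtains the compact embedding. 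Your caveat about compactness is well taken and worth keeping explicit: as literally stated, compact embedding of $H^{s}(\R^{N})$ into $L^{q}(\R^{N})$ on the whole space is false by translation invariance, and the correct assertion is compactness into $L^{q}_{loc}(\R^{N})$ (equivalently, on bounded domains); this local form is exactly what the paper uses later, in the proof of Theorem \ref{compactthm}, via $\mathcal{D}^{s,2}(\R^{N})\subset H^{s}_{loc}(\R^{N})\Subset L^{p}_{loc}(\R^{N})$, so your reading is the right one and no gap results for the rest of the paper.
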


\noindent
For more details about fractional Sobolev spaces, we refer to \cite{DPV}. \\
We remark that the symmetric-decreasing rearrangement of a measurable function $u: \R^{N} \rightarrow \R$ that vanishes at infinity (that is $|\{x\in \R^{N}: |u(x)|>a \}|<\infty$ for all $a>0$) is given by
$$
u^{*}(x)=\int_{0}^{\infty} \chi^{*}_{\{|u|>t \}}(x) dt
$$
where $\chi^{*}_{A}=\chi_{A^{*}}$ and $A^{*}=\{x: |x|<r\}$ is such that its volume is that of $A$. For standard properties of rearrangements of functions one can see \cite{LL}.\\
Now, we establish the following fractional Polya-Szeg\"o inequality:
%Then it holds the following fractional Polya-Szeg\"o inequality:
\begin{thm}\label{park}
Let $u:\R^{N} \rightarrow \R$ be a nonnegative measurable function that vanishes at infinity, and let us denote by $u^{*}$ its symmetric-decreasing rearrangement. Assume that $[u]_{H^{s}(\R^{N})}<\infty$. 
Then
\begin{equation}\label{dr}
[u^{*}]_{H^{s}(\R^{N})} \leq  [u]_{H^{s}(\R^{N})}.
\end{equation}
\end{thm}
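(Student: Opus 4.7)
My approach combines the heat-semigroup representation of the fractional Sobolev seminorm with Riesz's rearrangement inequality. The key observation is that the Gaussian heat kernel is radially symmetric and strictly decreasing in $|x|$, so it coincides with its own symmetric-decreasing rearrangement, which lets Riesz's inequality be applied time-slice by time-slice without any modification.

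The core identity I would use is the Bochner-type formula
\[
[u]^{2} = \frac{s}{\Gamma(1-s)} \int_{0}^{\infty} t^{-1-s} \cdot \frac{1}{2} \iint_{\R^{2N}} G_{t}(x-y) |u(x)-u(y)|^{2} \, dx \, dy \, dt,
\]
where $G_{t}(x) = (4\pi t)^{-N/2} e^{-|x|^{2}/(4t)}$ is the heat kernel; this is obtained by inserting the subordination formula $|\xi|^{2s} = \frac{s}{\Gamma(1-s)} \int_{0}^{\infty} t^{-1-s}(1-e^{-t|\xi|^{2}}) \, dt$ into the Plancherel representation of $[u]^{2}$ and unwinding the convolution semigroup via the identity $\|u\|_{L^2}^2 - \langle u, G_t * u\rangle = \tfrac{1}{2}\iint G_t(x-y)|u(x)-u(y)|^2 \, dx \, dy$. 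The key slice-wise step is then to prove, for each fixed $t>0$,
\[
\iint_{\R^{2N}} G_{t}(x-y) |u^{*}(x)-u^{*}(y)|^{2} \, dx \, dy \leq \iint_{\R^{2N}} G_{t}(x-y) |u(x)-u(y)|^{2} \, dx \, dy,
\]
which for $u \in L^{2}(\R^{N})$ follows at once by expanding $|u(x)-u(y)|^{2}$ into three terms and applying Riesz's rearrangement inequality to the cross term $\iint G_{t}(x-y) u(x) u(y) \, dx \, dy$ while invoking equimeasurability on the two diagonal pieces. Multiplying by $\tfrac{s}{\Gamma(1-s)} t^{-1-s}$ and integrating in $t$ reads the Bochner-type identity forwards on the right-hand side and backwards for $u^{*}$ on the left, giving $[u^{*}]^{2} \leq [u]^{2}$.

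The main obstacle is that a function in $\mathcal{D}^{s,2}(\R^{N})$ need not belong to $L^{2}(\R^{N})$, so the algebraic expansion $|u(x)-u(y)|^{2} = u(x)^{2}+u(y)^{2}-2u(x)u(y)$ produces individually divergent pieces and Riesz cannot be applied naively. I would circumvent this by a truncation-and-cut-off argument: set $u_{n} := \min(u,n)\, \phi_{n}$, where $\phi_{n}$ is a smooth cut-off equal to $1$ on $B_{n}$ and $0$ outside $B_{2n}$, so that $u_{n} \in L^{2}(\R^{N}) \cap \mathcal{D}^{s,2}(\R^{N})$ with $u_{n} \uparrow u$ pointwise a.e.; observe that $u_{n}^{*} \uparrow u^{*}$ pointwise a.e.\ via monotone convergence of the distribution functions; apply the slice-wise inequality to each $u_{n}$; and pass to the limit on both sides using Fatou's lemma for the Gagliardo double integral, together with the standard control $\limsup_{n} [u_{n}] \leq [u]$ for truncation and cut-off operations in fractional spaces. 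This is the only technically delicate point; the rest of the argument is a formal manipulation of the Bochner identity.
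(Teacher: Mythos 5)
Your slice-wise argument for $L^{2}$ functions (heat-kernel subordination plus Riesz's rearrangement inequality for the cross term, equimeasurability for the two diagonal terms) is correct, and it amounts to a self-contained proof of the ingredient that the paper simply imports: the paper does not reprove the $H^{s}$-level P\'olya--Szeg\"o inequality but cites Park's result \cite{Park} and devotes its proof entirely to reducing the general case to the $L^{2}$ case. So for the core inequality your route differs from the paper's only in being more explicit, and that part is fine.

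The reduction step is where your proposal has a genuine gap. You approximate by $u_{n}=\min(u,n)\,\phi_{n}$ and invoke ``the standard control $\limsup_{n}[u_{n}]\leq[u]$ for truncation and cut-off operations''. The truncation $\min(u,n)$ is harmless (it is a $1$-Lipschitz function of $u$, hence seminorm-nonincreasing), but multiplication by $\phi_{n}$ is \emph{not} a contraction for the Gagliardo seminorm, and the bound you need, with constant exactly $1$ in the limit, is not a one-line fact: writing $u_{n}(x)-u_{n}(y)=\phi_{n}(x)\bigl(v_{n}(x)-v_{n}(y)\bigr)+v_{n}(y)\bigl(\phi_{n}(x)-\phi_{n}(y)\bigr)$ with $v_{n}=\min(u,n)$ and estimating crudely only gives $\limsup_{n}[u_{n}]^{2}\leq 2[u]^{2}$; to recover the constant $1$ you must show that the error term $\iint v_{n}(y)^{2}|\phi_{n}(x)-\phi_{n}(y)|^{2}|x-y|^{-N-2s}\,dx\,dy$ vanishes as $n\to\infty$, which needs $u\in L^{2^{*}_{s}}(\R^{N})$ (not among your hypotheses, though derivable) and a nontrivial tail computation. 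Moreover the spatial cutoff is unnecessary: since $u$ vanishes at infinity, each superlevel set $\{u>c\}$ has finite measure, so the double truncation $u_{c}=\min\{\max\{u-c,0\},1/c\}$ used in the paper already lies in $L^{2}(\R^{N})$; it is a nondecreasing $1$-Lipschitz function of $u$, so $|u_{c}(x)-u_{c}(y)|\leq|u(x)-u(y)|$, it commutes with symmetric-decreasing rearrangement, and $[u_{c}]\to[u]$, $[u_{c}^{*}]\to[u^{*}]$ by monotone convergence. Replacing your $u_{n}$ by $u_{c}$ (and running your subordination/Riesz argument on $u_{c}$) closes the gap and yields a fully self-contained proof; as written, the cut-off step is asserted rather than proved.
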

\begin{proof}
Let 
$$
u_{c}(x)=\min\,  \{\max \, \{u(x)-c,0\},1/c\}
$$
for $c\in (0,1)$. Since $u$ vanishes at infinity, $u_{c}\in L^{2}(\R^{N})$.
In particular $u_{c}\in H^{s}(\R^{N})$ since
$$
|u_{c}(x)-u_{c}(y)| \leq |u(x)-u(y)|
$$
for any $x,y\in \R^{N}$. By monotone convergence Theorem we have 
\begin{equation}\label{mct}
\lim_{c \rightarrow 0} [u_{c}]_{H^{s}(\R^{N})}=[u]_{H^{s}(\R^{N})} \, \mbox{ and } \, \lim_{c \rightarrow 0} [u^{*}_{c}]_{H^{s}(\R^{N})}=[u^{*}]_{H^{s}(\R^{N})}.  
\end{equation}
Now, by using the result in \cite{Park}, we know that 
\begin{equation}\label{Park}
[u^{*}_{c}]_{H^{s}(\R^{N})}\leq [u_{c}]_{H^{s}(\R^{N})}.
\end{equation}
Then taking into account (\ref{mct}) and (\ref{Park}) we deduce the thesis.

\end{proof}

\noindent
%We have also the following useful lemma
We also prove the following useful lemma
\begin{lem}
Let $u\in L^{t}(\R^{N})$, $1\leq t<\infty$ be a nonnegative radial decreasing function (that is $0\leq u(x)\leq u(y)$ if $|x|\geq |y|$). Then 
\begin{equation}\label{CO}
|u(x)|\leq \left(\frac{N}{\omega_{N-1}}\right)^{\frac{1}{t}} |x|^{-\frac{N}{t}} \|u\|_{L^{t}(\R^{N})} \mbox{ for any } x\in \R^{N}\setminus \{0\},
\end{equation}
where $\omega_{N-1}$ is the Lebesgue measure of the unit sphere in $\R^{N}$.
\end{lem}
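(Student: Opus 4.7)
The plan is the standard radial lemma argument, exploiting the monotonicity of $u$ to bound its pointwise value by an integral average over a ball.

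Fix $x \in \R^{N} \setminus \{0\}$ and set $R = |x|$. Since $u$ is nonnegative and radial decreasing, $u(y) \geq u(x)$ for every $y$ with $|y| \leq R$. First I would integrate $u^{t}$ on the ball $B_{R}(0)$ and estimate from below:
\begin{equation*}
\|u\|_{L^{t}(\R^{N})}^{t} \geq \int_{B_{R}(0)} u(y)^{t} \, dy \geq u(x)^{t} \, |B_{R}(0)|.
\end{equation*}
Next I would use the standard formula $|B_{R}(0)| = \frac{\omega_{N-1}}{N} R^{N}$, where $\omega_{N-1}$ is the surface measure of the unit sphere in $\R^{N}$, to rewrite the inequality as
\begin{equation*}
u(x)^{t} \leq \frac{N}{\omega_{N-1}} \, |x|^{-N} \, \|u\|_{L^{t}(\R^{N})}^{t}.
\end{equation*}
Taking $t$-th roots yields the desired bound (\ref{CO}).

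There is no real obstacle: the argument is a one-line consequence of monotonicity plus the volume of a Euclidean ball, and it requires no regularity of $u$ beyond measurability (which is implicit in $u \in L^{t}(\R^{N})$ and in the definition of radial decreasing). The hypothesis $1 \leq t < \infty$ is used only to ensure that the $L^{t}$ integral is well defined and that taking $t$-th roots is monotone.
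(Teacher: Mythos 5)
Your proposal is correct and is essentially the paper's own argument: the paper bounds $\|u\|_{L^{t}}^{t}$ from below by the integral over the ball of radius $R=|x|$ in polar coordinates and uses monotonicity together with $|B_{R}|=\frac{\omega_{N-1}}{N}R^{N}$, exactly as you do. No gaps.
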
 
\begin{proof}
For all $R>0$ we have, setting $R=|x|$
$$
\|u\|^{t}_{L^{t}(\R^{N})}\geq \omega_{N-1} \int_{0}^{R} (u(r))^{t} r^{N-1} dr\geq \omega_{N-1} (u(R))^{t} \frac{R^{N}}{N}.
$$
\end{proof}

\noindent
Given $p<q$, we define the space $L^{p}+L^{q}:=L^{p}(\R^{N})+L^{q}(\R^{N})$ as the set of functions $u: \R^{N} \rightarrow \R$ such that
$$
u=u_{1}+u_{2}
$$
with $u_{1}\in L^{p}(\R^{N})$ and $u_{2}\in L^{q}(\R^{N})$.
We recall (see \cite{BLof}) that $L^{p}+L^{q}$ is a Banach space with respect to the norm
$$
\|u\|_{L^{p}+L^{q}}=\inf\{ \|u_{1}\|_{L^{p}(\R^{N})}+\|u_{2}\|_{L^{q}(\R^{N})}: u=u_{1}+u_{2} \}. 
$$
Moreover $L^{p}+L^{q}$ coincides with the dual of $L^{p'}\cap L^{q'}$. Then
$$
L^{p}+L^{q}=(L^{p'}\cap L^{q'})^{*}
$$ 
where $p'$ and $q'$ are the conjugate exponent of $p$ and $q$ respectively.
In particular, the norm
$$
\|u\|_{e}=\sup_{w\neq 0}\frac{\displaystyle{\int_{\R^{N}} u(x) v(x) dx}}{\|u\|_{L^{p'}(\R^{N})}+\|v\|_{L^{q'}(\R^{N})}}
$$
is equivalent to $\|\cdot\|_{L^{p}+L^{q}}$.\\
Actually $L^{p}+L^{q}$ is an Orlicz space with $N$-function (see \cite{Adams})
$$
A(u)=\max\{|u|^{p}, |u|^{q}\}.
$$
\noindent

Now we state some useful lemmas whose proofs can be obtained following those in \cite{BF, BM}.
\begin{lem}\label{lem2.1}
\noindent
\begin{compactenum}[(a)]
\item If $u\in L^{p}+L^{q}$, the following inequalities hold:
\begin{align*}
\max \Bigl\{\|u\|_{L^{q}(\R^{N} - \Gamma_{u})}-1 , &\frac{1}{1+ |\Gamma_{u}|^{\frac{1}{r}}} \|u\|_{L^{p}(\Gamma_{u})} \Bigr\} \leq \|u\|_{L^{p}+L^{q}}\\
& \leq \max \{ \|u\|_{L^{q}(\R^{N}-\Gamma_{u})}, \|u\|_{L^{p}(\Gamma_{u})}\}
\end{align*} 
where $r= \frac{pq}{q-p}$ and $\Gamma_{u}=\{x\in \R^{N}: |u(x)|>1\}$.
\item Let $\{u_{j}\}\subset L^{p}+L^{q}$ and set $\Gamma_{j} = \{x\in \R^{N} : |u_{j}(x)|>1\}$. Then $\{u_{j}\}$ is bounded in $L^{p}+L^{q}$ if and only if the sequences $\{|\Gamma_{j}|\}$ and $\{\|u_{j}\|_{L^{q}(\R^{N}- \Gamma_{j})}+ \|u_{j}\|_{L^{p}(\Gamma_{j})}\}$ are bounded. 
\item $g'$ is a bounded map from $L^{p}+L^{q}$ into $L^{\frac{p}{p-1}}\cap L^{\frac{q}{q-1}}$. 
\end{compactenum}
\end{lem}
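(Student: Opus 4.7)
The plan is to decompose $u$ across the level set $\Gamma_u=\{|u|>1\}$, exploiting that $|u|^q$ dominates $|u|^p$ on $\Gamma_u$ and the reverse inequality holds on $\mathbb{R}^N\setminus\Gamma_u$; this is exactly the feature encoded by the $N$-function $A(t)=\max\{|t|^p,|t|^q\}$.

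For part (a) the canonical splitting $u=u\chi_{\Gamma_u}+u\chi_{\mathbb{R}^N\setminus\Gamma_u}$ places the first piece in $L^p$ with norm $\|u\|_{L^p(\Gamma_u)}$ and the second in $L^q$ with norm $\|u\|_{L^q(\mathbb{R}^N\setminus\Gamma_u)}$; passing to the equivalent Luxemburg norm attached to $A$ turns the sum into a maximum, giving the upper bound. For the lower bound, consider any admissible decomposition $u=u_1+u_2$ with $u_1\in L^p$, $u_2\in L^q$. H\"older on $\Gamma_u$ with exponent $r=pq/(q-p)$ yields $\|u_2\|_{L^p(\Gamma_u)}\leq |\Gamma_u|^{1/r}\|u_2\|_{L^q}$, whence $\|u\|_{L^p(\Gamma_u)}\leq (1+|\Gamma_u|^{1/r})(\|u_1\|_{L^p}+\|u_2\|_{L^q})$; taking the infimum produces the second stated inequality. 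The first inequality is obtained similarly, this time using $|u|\leq 1$ on $\mathbb{R}^N\setminus\Gamma_u$ to absorb one summand and account for the additive $-1$ correction.

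For part (b), the ``if'' direction is immediate from the upper bound of (a); for the converse, the lower bounds give $\|u_j\|_{L^q(\mathbb{R}^N\setminus\Gamma_j)}\leq M+1$ and $\|u_j\|_{L^p(\Gamma_j)}\leq M(1+|\Gamma_j|^{1/r})$, after which $|u_j|>1$ on $\Gamma_j$ yields $|\Gamma_j|\leq \|u_j\|_{L^p(\Gamma_j)}^p\leq M^p(1+|\Gamma_j|^{1/r})^p$. Since $p/r=(q-p)/q<1$, this self-improving inequality forces $\{|\Gamma_j|\}$ to stay bounded, and hence so does $\{\|u_j\|_{L^p(\Gamma_j)}\}$. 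For part (c), Remark~1 gives $|g'(t)|\leq c_1|t|^{p-1}$ when $|t|\geq 1$ and $|g'(t)|\leq c_1|t|^{q-1}$ when $|t|\leq 1$. Splitting $\int|g'(u)|^{p/(p-1)}$ at $\Gamma_u$, the contribution from $\Gamma_u$ is at most $c_1^{p/(p-1)}\int_{\Gamma_u}|u|^p$, finite by (a)--(b); on $\mathbb{R}^N\setminus\Gamma_u$ the elementary inequality $(q-1)p/(p-1)\geq q$ combined with $|u|\leq 1$ yields $|g'(u)|^{p/(p-1)}\leq c_1^{p/(p-1)}|u|^q$, also finite, and the analogous argument with the roles of $p$ and $q$ swapped places $g'(u)$ in $L^{q/(q-1)}$, with quantitative bounds governed by (b).

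The main obstacle I anticipate is the lower bound in part (a): the awkward factor $1+|\Gamma_u|^{1/r}$ only surfaces after treating the two summands in an arbitrary admissible decomposition asymmetrically, using H\"older on the high-level set $\Gamma_u$ for the $L^p$ side and the pointwise inequality $|u|\le 1$ on its complement for the $L^q$ side. Everything else is a fairly mechanical bookkeeping exercise from this lower bound.
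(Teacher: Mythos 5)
The paper never actually proves Lemma \ref{lem2.1} --- it only points to \cite{BF,BM} --- so your argument has to stand on its own, and most of it does: part (b), part (c) (with the exponent checks $(q-1)\tfrac{p}{p-1}\ge q$ and $(p-1)\tfrac{q}{q-1}\le p$), and the $\tfrac{1}{1+|\Gamma_u|^{1/r}}\|u\|_{L^p(\Gamma_u)}$ half of the lower bound are fine. The genuine gap is the upper bound in (a). The canonical splitting only gives $\|u\|_{L^p+L^q}\le \|u\|_{L^p(\Gamma_u)}+\|u\|_{L^q(\R^N\setminus\Gamma_u)}$, a \emph{sum}, and the phrase ``passing to the equivalent Luxemburg norm turns the sum into a maximum'' is not a proof: equivalence of norms only transfers estimates up to multiplicative constants, whereas the stated inequality has constant $1$ (moreover the Orlicz space with $N$-function $\max\{|t|^p,|t|^q\}$ is $L^p\cap L^q$, not $L^p+L^q$, so the norm you invoke is not even an equivalent norm on the right space). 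In fact, for the norm as defined in this paper, $\|u\|_{L^p+L^q}=\inf\{\|u_1\|_{L^p}+\|u_2\|_{L^q}\}$, the constant-one max bound fails: take $u=2\chi_B+\tfrac12\chi_A$ with $|B|=1$, $|A|=K\ge 4^q$, so that $\max\{\|u\|_{L^q(\R^N-\Gamma_u)},\|u\|_{L^p(\Gamma_u)}\}=\tfrac12K^{1/q}$; testing the duality $L^p+L^q=(L^{p'}\cap L^{q'})^*$ with $v=(K+1)^{-(q-1)/q}\chi_{A\cup B}$ (which satisfies $\|v\|_{L^{p'}}\le\|v\|_{L^{q'}}=1$) gives $\|u\|_{L^p+L^q}\ge \tfrac{K+4}{2(K+1)^{(q-1)/q}}>\tfrac12 K^{1/q}$. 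So no argument can yield the display exactly as stated; what you can honestly record from your splitting is $\|u\|_{L^p+L^q}\le\|u\|_{L^q(\R^N-\Gamma_u)}+\|u\|_{L^p(\Gamma_u)}\le 2\max\{\cdot,\cdot\}$, which is all that parts (b), (c) and every later use in the paper (Theorem \ref{compactthm}, Step 5) require.

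The other half of the lower bound is also glossed over in a way that matters. For an \emph{arbitrary} admissible decomposition $u=u_1+u_2$ you cannot ``use $|u|\le1$ on $\R^N\setminus\Gamma_u$'' directly, because $|u_1|$ may well exceed $1$ where $|u|\le 1$. First reduce to decompositions dominated by $u$: replace $u_i$ by $\tilde u_i=u\,|u_i|/(|u_1|+|u_2|)$ (and $0$ where $u=0$), so that $\tilde u_1+\tilde u_2=u$ and $|\tilde u_i|\le|u_i|$ pointwise, hence neither norm increases. Then $|\tilde u_1|\le|u|\le1$ on $\R^N\setminus\Gamma_u$ gives $\|\tilde u_1\|_{L^q(\R^N-\Gamma_u)}\le\|\tilde u_1\|_{L^p}^{p/q}\le 1+\|\tilde u_1\|_{L^p}$, and the triangle inequality followed by the infimum yields $\|u\|_{L^q(\R^N-\Gamma_u)}-1\le\|u\|_{L^p+L^q}$. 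With these two repairs (and with the max in the upper bound weakened to the sum, or to $2\max$), your bookkeeping for (b) --- including the self-improving estimate $|\Gamma_j|\le M^p(1+|\Gamma_j|^{1/r})^p$ with $p/r=(q-p)/q<1$ --- and for (c) goes through as quantitative bounds on bounded sets, which is what the rest of the paper needs.
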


\begin{lem}\label{lem2.3}
\noindent
\begin{compactenum}[(a)]
\item If $\theta, u$ are bounded in $L^{p}+L^{q}$, then $g''(\theta)u$ is bounded in $L^{p'}\cap L^{q'}$.
\item $g''$ is a bounded map from $L^{p}+L^{q}$ into $L^{\frac{p}{p-2}}\cap L^{\frac{q}{q-2}}$. 
\item $g''$ is a continuous map from $L^{p}+L^{q}$ into $L^{\frac{p}{p-2}}\cap L^{\frac{q}{q-2}}$. 
\item The map $(u, v)\mapsto uv$ from $(L^{p}+L^{q})^{2}$ in $L^{\frac{p}{2}}+L^{\frac{q}{2}}$ is bounded.  
\end{compactenum}
\end{lem}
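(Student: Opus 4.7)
The unifying idea is to combine the decomposition characterization of $L^{p}+L^{q}$ in Lemma~\ref{lem2.1}(a) with the pointwise growth estimates in $(g2)$. For any $u\in L^{p}+L^{q}$ one can write $u=u\,\chi_{\Gamma_{u}}+u\,\chi_{\Gamma_{u}^{c}}$, so that $u\,\chi_{\Gamma_{u}}\in L^{p}(\R^{N})$ and $u\,\chi_{\Gamma_{u}^{c}}\in L^{q}(\R^{N})$ with norms controlled by $\|u\|_{L^{p}+L^{q}}$. Since $(g2)$ provides two different pointwise bounds on $g''$ according to whether $|t|\le 1$ or $|t|\ge 1$, the natural strategy is to estimate integrals region by region, tuning H\"older's exponents each time so that the resulting bound features $|u|^{p}$ on the "large" part and $|u|^{q}$ on the "small" part.

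For (b) I would split $\R^{N}$ into $\Gamma_{u}$ and $\Gamma_{u}^{c}$. On $\Gamma_{u}$, $|g''(u)|^{p/(p-2)}\le c_{2}^{p/(p-2)}|u|^{p}$, which is integrable by the $L^{p}(\Gamma_{u})$ piece of $u$. On $\Gamma_{u}^{c}$, $|g''(u)|^{p/(p-2)}\le c_{2}^{p/(p-2)}|u|^{(q-2)p/(p-2)}$; since $(q-2)p/(p-2)\ge q$ and $|u|\le 1$ there, this is dominated by $c_{2}^{p/(p-2)}|u|^{q}$, integrable by the $L^{q}(\Gamma_{u}^{c})$ piece of $u$. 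A symmetric calculation gives the $L^{q/(q-2)}$ bound; in both cases the constants depend only on $\|u\|_{L^{p}+L^{q}}$.

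For (a), split both $u$ and $\theta$ into their "large" and "small" parts and compute $\int|g''(\theta)u|^{p'}dx$ (respectively $\int|g''(\theta)u|^{q'}dx$) on each of the four resulting regions. In each region the appropriate growth estimate on $g''$ combined with H\"older reduces the integrand to a product of powers of $|\theta|$ and $|u|$ whose integral is controlled by $\|\theta\|_{L^{p}(\Gamma_{\theta})}$, $\|\theta\|_{L^{q}(\Gamma_{\theta}^{c})}$, $\|u\|_{L^{p}(\Gamma_{u})}$, $\|u\|_{L^{q}(\Gamma_{u}^{c})}$. For (c), a Nemytskii-type argument applies: if $u_{n}\to u$ in $L^{p}+L^{q}$ then, up to a subsequence, $u_{n}\to u$ a.e., and the pointwise estimates of $(g2)$ provide integrable majorants for $|g''(u_{n})-g''(u)|^{p/(p-2)}$ and $|g''(u_{n})-g''(u)|^{q/(q-2)}$ built out of $|u_{n}|^{p}+|u|^{p}$ and $|u_{n}|^{q}+|u|^{q}$, so dominated convergence yields continuity. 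For (d), expand $uv$ using the decompositions of $u$ and $v$ into four products: by H\"older $u\,\chi_{\Gamma_{u}}\cdot v\,\chi_{\Gamma_{v}}\in L^{p/2}$ and $u\,\chi_{\Gamma_{u}^{c}}\cdot v\,\chi_{\Gamma_{v}^{c}}\in L^{q/2}$, while the two mixed products lie in $L^{pq/(p+q)}$, which embeds into $L^{p/2}+L^{q/2}$ via a level-set splitting since $p/2<pq/(p+q)<q/2$.

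The main obstacle will be the four-region bookkeeping in (a), especially in the region where both $|\theta|,|u|\le 1$: one has to pick the H\"older pair so that the higher power $(q-2)p'$ reconstructs $|\theta|^{q}$ and the remaining power of $|u|$ reconstructs $|u|^{q}$, and verify the analogous condition for the $L^{q'}$ bound, where instead $|\theta|^{p}$ and $|u|^{p}$ must appear on the set where $|\theta|,|u|\ge 1$. The computations are elementary but tedious and closely mirror those in \cite{BF, BM}.
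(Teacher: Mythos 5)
Your proposal is correct and follows essentially the same route as the paper, which does not prove Lemma \ref{lem2.3} itself but defers to \cite{BF, BM}, where the argument is exactly this combination of the level-set decomposition of Lemma \ref{lem2.1} with the growth bounds of $(g2)$ and region-by-region H\"older estimates. The only details needing the standard fixes are the ones you flagged: in the region $\{|\theta|\le 1,\ |u|\le 1\}$ no conjugate H\"older pair can reconstruct both $|\theta|^{q}$ and $|u|^{q}$ exactly (the required reciprocals sum to $(q-1)p'/q>1$), but choosing $a^{*}=q/p'$, $a=q/(q-p')$ and then using $|\theta|\le 1$ to bound $|\theta|^{(q-2)p'a}$ by $|\theta|^{q}$ (since $(q-2)p'a\ge q$ because $p'\ge q'$) settles it; and in (c) your majorants depend on $n$, so one should use the generalized dominated convergence theorem, or a fixed majorant along a rapidly convergent subsequence (note $|g''(t)|\le c_{2}\min\{|t|^{p-2},|t|^{q-2}\}$ for all $t$), together with the usual sub-subsequence argument to get continuity of the whole sequence.
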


\begin{lem}\label{lem2.4}
The functional $H: L^{p}+L^{q}\rightarrow \R$ defined by
$$
H(u)=\int_{\R^{N}} g(u(x)) dx
$$
is of class $C^{2}$. 
\end{lem}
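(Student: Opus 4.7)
The plan is to establish $H \in C^2$ by computing candidate Gâteaux derivatives, using the Orlicz-space structural estimates of Lemmas 2.1 and 2.3 to show each candidate defines a bounded (multi)linear form on $L^p + L^q$, and then upgrading Gâteaux differentiability to Fréchet differentiability via continuity of the derivatives.

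First, I would check that $H$ is well-defined on $L^p+L^q$. Given $u \in L^p+L^q$, split $\mathbb{R}^N = \Gamma_u \cup \Gamma_u^c$ where $\Gamma_u = \{|u|>1\}$. On $\Gamma_u$ the growth $|g(u)| \leq c_3 |u|^p$ from Remark~\ref{rem1} combined with $\|u\|_{L^p(\Gamma_u)} \leq (1+|\Gamma_u|^{1/r})\|u\|_{L^p+L^q}$ from Lemma~\ref{lem2.1}(a) gives integrability; on $\Gamma_u^c$, $|g(u)| \leq c_3 |u|^q$ and $\|u\|_{L^q(\Gamma_u^c)} \leq 1 + \|u\|_{L^p+L^q}$ yield the bound. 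Next, I propose $H'(u)[v] = \int_{\mathbb{R}^N} g'(u)v\,dx$ as the first derivative: this is well defined because by Lemma~\ref{lem2.1}(c) $g'(u) \in L^{p'}\cap L^{q'} = (L^p+L^q)^*$, and the duality pairing is continuous in $v$. To identify it with the actual derivative, I would apply the mean value theorem pointwise to write
\begin{equation*}
\frac{H(u+tv)-H(u)}{t} - \int g'(u)v\,dx = \int [g'(u+\theta_t v) - g'(u)]v\,dx
\end{equation*}
for some $\theta_t(x) \in (0,t)$, and use the continuity of $g': L^p+L^q \to L^{p'}\cap L^{q'}$ (from the same Lemma~\ref{lem2.1}(c), with a continuity argument using the growth in Remark~\ref{rem1} and Vitali/dominated convergence) to conclude the difference tends to zero.

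For the second derivative I propose $H''(u)[v,w] = \int_{\mathbb{R}^N} g''(u) v w\,dx$. The delicate point is that this bilinear form has to be bounded on $(L^p+L^q)^2$, and this is exactly what Lemmas~\ref{lem2.3}(a)-(b)-(d) are designed for: by Lemma~\ref{lem2.3}(d), $vw \in L^{p/2}+L^{q/2}$ with $\|vw\|_{L^{p/2}+L^{q/2}} \leq C\|v\|_{L^p+L^q}\|w\|_{L^p+L^q}$; by Lemma~\ref{lem2.3}(b), $g''(u) \in L^{p/(p-2)} \cap L^{q/(q-2)}$, which is the dual of $L^{p/2}+L^{q/2}$ since $p,q>2$. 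Hence the integral is finite and
\begin{equation*}
|H''(u)[v,w]| \leq \|g''(u)\|_{L^{p/(p-2)} \cap L^{q/(q-2)}} \|vw\|_{L^{p/2}+L^{q/2}} \leq C(u) \|v\|_{L^p+L^q}\|w\|_{L^p+L^q}.
\end{equation*}
Differentiability of $H'$ (in the Fréchet sense, with derivative this bilinear form) is then checked through another mean value identity, writing $g'(u+v) - g'(u) - g''(u)v = \int_0^1 [g''(u+\tau v) - g''(u)]v\,d\tau$ and applying the duality pairing against $w$.

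The main obstacle, as is typical in zero-mass variational problems, is the $C^2$ continuity of $H''$, because the nonlinearity is not homogeneous and one must handle simultaneously the $p$-growth at infinity and the $q$-growth near zero. This is where Lemma~\ref{lem2.3}(c) becomes essential: the continuity of $u \mapsto g''(u)$ from $L^p+L^q$ into $L^{p/(p-2)}\cap L^{q/(q-2)}$ combined with the boundedness of multiplication from Lemma~\ref{lem2.3}(d) directly gives
\begin{equation*}
|(H''(u_n) - H''(u))[v,w]| \leq \|g''(u_n) - g''(u)\|_{L^{p/(p-2)}\cap L^{q/(q-2)}} C\|v\|\,\|w\| \to 0
\end{equation*}
as $u_n \to u$ in $L^p+L^q$, which yields continuity of $u \mapsto H''(u)$ into the space of continuous bilinear forms. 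Since both candidate derivatives are continuous in $u$, standard results promote the Gâteaux derivatives to Fréchet derivatives and give $H \in C^2(L^p+L^q, \mathbb{R})$.
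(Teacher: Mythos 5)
You should first note that the paper itself contains no proof of Lemma \ref{lem2.4}: it states Lemmas \ref{lem2.1}--\ref{lem2.5} and refers their proofs to \cite{BF, BM}. Your argument is essentially the standard one used in those references, and it is correct in structure: well-definedness via the splitting over $\Gamma_u$ and the growth bounds of Remark \ref{rem1}; the candidate derivatives realized as duality pairings, with $g'(u)\in L^{p'}\cap L^{q'}=(L^{p}+L^{q})^{*}$ and $g''(u)\in L^{\frac{p}{p-2}}\cap L^{\frac{q}{q-2}}=(L^{\frac{p}{2}}+L^{\frac{q}{2}})^{*}$ (here $p,q>2$ is exactly what makes the second pairing work); boundedness of $H''(u)$ from Lemma \ref{lem2.3} $(b)$, $(d)$; continuity of $u\mapsto H''(u)$ from Lemma \ref{lem2.3} $(c)$, $(d)$; and the standard upgrade from continuous G\^ateaux derivatives to Fr\'echet ones.

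Two points deserve tightening. First, the pointwise mean value theorem producing $\theta_{t}(x)\in(0,t)$ creates an avoidable measurability issue; use instead the integral form $g(u+tv)-g(u)=t\int_{0}^{1}g'(u+\tau t v)v\,d\tau$, which you already use at the second-order step. Second, continuity of $g'$ from $L^{p}+L^{q}$ into $L^{p'}\cap L^{q'}$ is not actually part of Lemma \ref{lem2.1} $(c)$, which only asserts boundedness, so as written your first-derivative step leans on a fact you would still have to prove. You can either derive it from the second-order information, writing $g'(u)-g'(w)=\int_{0}^{1}g''(w+\tau(u-w))(u-w)\,d\tau$ and invoking Lemma \ref{lem2.3}, or bypass it entirely: estimate the first-order remainder $g(u+v)-g(u)-g'(u)v=\int_{0}^{1}(1-\tau)g''(u+\tau v)v^{2}\,d\tau$, and use the boundedness of $g''$ on balls (Lemma \ref{lem2.3} $(b)$) together with Lemma \ref{lem2.3} $(d)$ to get a bound of order $\|v\|_{L^{p}+L^{q}}^{2}$, which gives Fr\'echet differentiability of $H$ directly; continuity of $H'$ then follows once $H'$ is shown differentiable. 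With these adjustments your proof is complete and self-contained given Lemmas \ref{lem2.1} and \ref{lem2.3}.
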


\begin{lem}\label{lem2.5}
If the sequence $\{u_{j}\}$ converges to $u$ in $L^{p}+L^{q}$, then the sequence $\{\int_{\R^{N}} g'(u_{j})u_{j} \,dx\}$ converges to $\int_{\R^{N}} g'(u)u \,dx$.
\end{lem}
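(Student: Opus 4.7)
The plan is to split
\[
\int_{\R^{N}} g'(u_{j}) u_{j}\,dx - \int_{\R^{N}} g'(u)u\,dx = \int_{\R^{N}} [g'(u_{j})-g'(u)]\,u_{j}\,dx + \int_{\R^{N}} g'(u)[u_{j}-u]\,dx
\]
and to control each piece via the duality $L^{p}+L^{q}=(L^{p'}\cap L^{q'})^{*}$ recalled in Section~2.

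The second piece is easy: Lemma~\ref{lem2.1}(c) gives $g'(u)\in L^{p'}\cap L^{q'}$, and since $\|u_{j}-u\|_{L^{p}+L^{q}}\to 0$ by hypothesis, the duality pairing yields
\[
\left|\int_{\R^{N}} g'(u)[u_{j}-u]\,dx\right| \leq C\,\|g'(u)\|_{L^{p'}\cap L^{q'}}\,\|u_{j}-u\|_{L^{p}+L^{q}} \longrightarrow 0.
\]

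For the first piece, note that $\{u_{j}\}$ is bounded in $L^{p}+L^{q}$ (a convergent sequence is bounded), so by the same duality estimate it suffices to show that $g'(u_{j})\to g'(u)$ in $L^{p'}\cap L^{q'}$. I plan to read this continuity off Lemma~\ref{lem2.4}: since $H(u)=\int g(u)\,dx$ is of class $C^{2}$ on $L^{p}+L^{q}$, its Fr\'echet derivative $H'$, which coincides with the Nemytskii map $u\mapsto g'(u)$ under the identification $(L^{p}+L^{q})^{*}\cong L^{p'}\cap L^{q'}$, is in particular continuous. A self-contained alternative proceeds from the mean value identity
\[
g'(u_{j})-g'(u) = (u_{j}-u)\int_{0}^{1} g''(u+t(u_{j}-u))\,dt,
\]
whose right-hand side is bounded in $L^{p'}\cap L^{q'}$ by Lemma~\ref{lem2.3}(a) (applied with $\theta=u+t(u_{j}-u)$) and tends to zero there thanks to the continuity of $g''$ provided by Lemma~\ref{lem2.3}(c).

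The main obstacle I anticipate is precisely this continuity of $u\mapsto g'(u)$ into the intersection space $L^{p'}\cap L^{q'}$: the different growth exponents of $g$ near $0$ and near infinity force one to handle the regions $\{|u|\leq 1\}$ and $\{|u|>1\}$ separately through the Orlicz structure, and this is where the technical content of the lemma sits. Once this continuity is granted, the lemma follows from two straightforward applications of the duality pairing to the two terms above.
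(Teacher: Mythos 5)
Your main argument is correct, and it is worth noting that the paper itself does not prove Lemma \ref{lem2.5}: it only remarks that the proofs of Lemmas \ref{lem2.1}--\ref{lem2.5} can be obtained following \cite{BF, BM}, so what you give is a legitimate stand-alone derivation from the statements the paper records. Your primary route --- splitting into $\int_{\R^N}[g'(u_j)-g'(u)]u_j\,dx+\int_{\R^N} g'(u)(u_j-u)\,dx$, using the duality between $L^{p}+L^{q}$ and $L^{p'}\cap L^{q'}$, Lemma \ref{lem2.1}(c) for $g'(u)$, boundedness of the convergent sequence $\{u_j\}$, and the continuity of $H'$ coming from Lemma \ref{lem2.4} --- is sound. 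In fact, once one knows $H\in C^{1}$ with $\langle H'(u),v\rangle=\int_{\R^N} g'(u)v\,dx$, the lemma is just the joint continuity of the duality pairing: the estimate $\left|\int_{\R^N}[g'(u_j)-g'(u)]u_j\,dx\right|\le\|H'(u_j)-H'(u)\|_{(L^{p}+L^{q})^{*}}\,\|u_j\|_{L^{p}+L^{q}}$ already suffices, so you need not pass through the $L^{p'}\cap L^{q'}$ norm at all; if you do want convergence of $g'(u_j)$ in $L^{p'}\cap L^{q'}$ itself, just observe that the unit balls of $L^{p}$ and of $L^{q}$ are contained in that of $L^{p}+L^{q}$, so the dual norm dominates $\max\{\|\cdot\|_{L^{p'}},\|\cdot\|_{L^{q'}}\}$ and the two notions of convergence coincide.

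One caveat concerns your ``self-contained alternative'' via the mean value identity: boundedness from Lemma \ref{lem2.3}(a) together with the continuity of $g''$ from Lemma \ref{lem2.3}(c) does not by itself force $(u_j-u)\int_{0}^{1}g''(u+t(u_j-u))\,dt$ to tend to $0$ in $L^{p'}\cap L^{q'}$; the step actually needed is the bilinear H\"older-type estimate $\|wv\|_{L^{p'}\cap L^{q'}}\le C\,\|w\|_{L^{\frac{p}{p-2}}\cap L^{\frac{q}{q-2}}}\,\|v\|_{L^{p}+L^{q}}$, which is true (the mixed terms are handled by interpolation, using $p<q$) but is not among the paper's stated lemmas, so that branch would require this estimate to be proved. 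Since the primary route does not rely on it, the proof as a whole stands.
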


\begin{remark}
By Lemma \ref{lem2.1} $(a)$ we have $L^{2^{*}_{s}}(\R^{N})\subset L^{p}+L^{q}$ when $2<p<2^{*}_{s}<q$. 
In fact, by using $p<2^{*}_{s}<q$ we find for any $u\in L^{2^{*}_{s}}(\R^{N})$
$$
\|u\|^{q}_{L^{q}(\R^{N}-\Gamma_{u})}\leq \|u\|^{2^{*}_{s}}_{L^{2^{*}_{s}}(\R^{N}-\Gamma_{u})}
$$
and
$$
\|u\|^{p}_{L^{p}(\Gamma_{u})}\leq \|u\|^{2^{*}_{s}}_{L^{2^{*}_{s}}(\Gamma_{u})}
$$
which together with $(a)$ of Lemma \ref{lem2.1} imply the claim.\\
Moreover, by the Sobolev inequality $\mathcal{D}^{s,2}(\R^{N})\subset L^{2^{*}_{s}}(\R^{N})$ (see Theorem \ref{sobhs}), we get the continuous embedding:
\begin{align}\label{cemb}
\mathcal{D}^{s,2}(\R^{N}) \subset L^{p}+L^{q}. 
\end{align}
\end{remark}

\noindent
At this point we are ready to prove the following result
\begin{thm}\label{compactthm}
Let $N\geq 2$, $s\in (0, 1)$ and $2<p<2^{*}_{s}<q$.
Let us denote by $\tilde{\mathcal{D}}^{s,2}_{rad}(\R^{N})$ the space of nonnegative radial decreasing functions in $\mathcal{D}^{s,2}(\R^{N})$.
Then $\tilde{\mathcal{D}}^{s,2}_{rad}(\R^{N})$ is compactly embedded in $L^{p}+L^{q}$.
\end{thm}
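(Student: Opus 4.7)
The plan is to take any bounded sequence $\{u_n\}$ in $\tilde{\mathcal{D}}^{s,2}_{rad}(\R^{N})$ and, using reflexivity of $\mathcal{D}^{s,2}(\R^{N})$ together with the Sobolev embedding of Theorem \ref{sobhs}, extract a subsequence (not relabeled) converging weakly in $\mathcal{D}^{s,2}(\R^{N})$, weakly in $L^{2^{*}_{s}}(\R^{N})$, and pointwise almost everywhere to some $u$. Since pointwise a.e.\ limits of radial decreasing functions are themselves radial decreasing, $u \in \tilde{\mathcal{D}}^{s,2}_{rad}(\R^{N})$; I set $v_n := u_n - u$.

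The central observation is that the radial decay inequality (\ref{CO}) applied with $t = 2^{*}_{s}$, combined with $\|u_n\|_{L^{2^{*}_{s}}} \leq C [u_n]$, yields the uniform pointwise bound
\[
|u_n(x)|,\ |u(x)| \leq M\, |x|^{-\frac{N-2s}{2}} \quad \text{for every } x \neq 0,
\]
with $M$ independent of $n$. Hence $|v_n(x)| \leq 2M\, |x|^{-(N-2s)/2}$, and the super-level sets $\Gamma_{n} := \{|v_n|>1\}$ are all contained in a single ball $B_{R_{0}}$.

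By Lemma \ref{lem2.1}(a) it then suffices to show that $\|v_n\|_{L^{p}(\Gamma_{n})} \to 0$ and $\|v_n\|_{L^{q}(\R^{N} \setminus \Gamma_{n})} \to 0$. For the former, on the bounded ball $B_{R_{0}}$ the sequence $u_n$ is bounded in $H^{s}(B_{R_{0}})$ (its $L^{2}$ norm being controlled by its $L^{2^{*}_{s}}$ norm on a bounded set via H\"older), so the classical Rellich theorem gives $v_n \to 0$ in $L^{p}(B_{R_{0}})$ since $p < 2^{*}_{s}$, and one bounds $\|v_n\|_{L^{p}(\Gamma_{n})} \leq \|v_n\|_{L^{p}(B_{R_{0}})}$. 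For the latter I fix $R>0$ and split $\R^{N} \setminus \Gamma_{n}$ into the regions $|x| \leq R$ and $|x| > R$: on $\{|v_n|\leq 1\} \cap B_{R}$ I use $|v_n|^{q}\leq |v_n|^{p}$ and again Rellich, while on $|x|>R$ the pointwise decay bound yields
\[
\int_{|x|>R} |v_n(x)|^{q}\, dx \leq C \int_{R}^{\infty} r^{N-1-\frac{(N-2s)q}{2}}\, dr = C' R^{\, N - \frac{(N-2s)q}{2}},
\]
where the exponent is strictly negative precisely because $q > 2^{*}_{s}$. Sending first $n \to \infty$ and then $R \to \infty$ closes the argument.

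The step I expect to require the most care is the interplay of the two exponents: $p < 2^{*}_{s}$ is what enables Rellich compactness on each bounded set, while $q > 2^{*}_{s}$ is exactly what makes the tail integrable against the radial decay $|x|^{-(N-2s)/2}$. Neither strict inequality is dispensable, which confirms that the Orlicz sum $L^{p}+L^{q}$ is the natural target space and that the super-level-set decomposition in Lemma \ref{lem2.1}(a) is the right tool to exploit both regimes simultaneously.
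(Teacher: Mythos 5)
Your proof is correct and takes essentially the same route as the paper: the radial decay estimate (\ref{CO}) with $t=2^{*}_{s}$ gives the uniform pointwise bound confining the super-level sets to a fixed ball, the $L^{q}$ tail is controlled because $q>2^{*}_{s}$, local fractional Rellich compactness ($\mathcal{D}^{s,2}\subset H^{s}_{loc}\Subset L^{p}_{loc}$, $p<2^{*}_{s}$) handles the part on the ball, and Lemma \ref{lem2.1}(a) converts these bounds into convergence in $L^{p}+L^{q}$. The only (harmless, in fact slightly more complete) deviation is that you treat a general bounded sequence by subtracting the weak limit $u$ --- legitimate since $u$ is again nonnegative radial decreasing, so (\ref{CO}) applies to it --- whereas the paper reduces to the case of weak limit zero.
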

\begin{proof}
We proceed as in the proof of  Lemma $3$ in \cite{BF}.\\
Let $\{u_{j}\}$ be a sequence in $\tilde{\mathcal{D}}^{s,2}_{rad}(\R^{N})$ such that as $j\rightarrow +\infty$
$$
u_{j} \rightharpoonup 0 \mbox{ in } \mathcal{D}^{s,2}(\R^{N}).
$$
By (\ref{CO}) follows that there exists a positive constant $C=C(s,N)$ such that
\begin{equation}\label{73}
|u_{j}(x)|\leq C |x|^{-\frac{N-2s}{2}} \mbox{ for any } j\in \N, x\in \R^{N}\setminus \{0\}.
\end{equation}
Fix $\varepsilon>0$. By using (\ref{73}) and $q>2^{\s}$, for $R>0$ big enough we get
\begin{equation}\label{74}
\int_{\{|x|\geq R\}} |u_{j}(x)|^{q} dx\leq C^{q} \int_{\{|x|\geq R\}} \frac{1}{|x|^{q(\frac{N-2s}{2})}} dx< \frac{\varepsilon}{2}
\end{equation}
for all $j\in \N$.
Now, we observe that $\mathcal{D}^{s,2}(\R^{N})\subset H^{s}_{loc}(\R^{N})\Subset L^{p}_{loc}(\R^{N})$
since $p\in (2, 2^{*}_{s})$.
In particular we have
\begin{equation}\label{75}
\int_{\{|x|< R\}} |u_{j}(x)|^{p} dx< \frac{\varepsilon}{2}.
\end{equation}
Taking into account (\ref{74}) and (\ref{75}) we have for $j$ large
\begin{equation}\label{76}
\int_{\{|x|\geq R\}} |u_{j}(x)|^{q} dx+\int_{\{|x|< R\}} |u_{j}(x)|^{p} dx< \varepsilon.
\end{equation}
If $R$ is sufficiently large, by (\ref{73}) follows that for any $j\in \N$
$$
|u_{j}(x)|\leq 1 \mbox{ for } |x|>R.
$$ 
Then for all $j\in \N$
$$
\Gamma_{j}:=\{x\in \R^{N}: |u_{j}(x)|>1\}\subset B_{R}.
$$
Let us observe that
\begin{align}\label{77}
\|u_{j}\|^{q}_{L^{q}(\Gamma_{j}^{c})}+\|u_{j}\|^{p}_{L^{p}(\Gamma_{j})}=\|u_{j}\|^{q}_{L^{q}(B_{R}^{c})}+\|u_{j}\|^{q}_{L^{q}(B_{R}\setminus \Gamma_{j})}+\|u_{j}\|^{p}_{L^{p}(B_{R})}-\|u_{j}\|^{p}_{L^{p}(B_{R}\setminus \Gamma_{j})}.
\end{align}
Here $A^{c}=\R^{N}\setminus A$ for $A\subset \R^{N}$.
Since $p<q$ and $|u_{j}(x)|\leq 1$ in $B_{R}\setminus \Gamma_{j}$ we obtain
\begin{align}\label{78}
\|u_{j}\|^{q}_{L^{q}(B_{R}\setminus \Gamma_{j})}\leq \|u_{j}\|^{p}_{L^{p}(B_{R}\setminus \Gamma_{j})}.
\end{align}
Putting together (\ref{76}), (\ref{77}) and (\ref{78}) we have for $j$ large enough
$$
\|u_{j}\|^{q}_{L^{q}(\Gamma_{j}^{c})}+\|u_{j}\|^{p}_{L^{p}(\Gamma_{j})}\leq \|u_{j}\|^{q}_{L^{q}(B_{R}^{c})}+\|u_{j}\|^{p}_{L^{p}(B_{R})}<\varepsilon
$$
so, in particular, 
\begin{equation}\label{80}
\|u_{j}\|_{L^{q}(\Gamma_{j}^{c})}<\varepsilon^{1/q} \mbox{ and } \|u_{j}\|_{L^{p}(\Gamma_{j})}<\varepsilon^{1/p}.
\end{equation}
Then, by Lemma $1$ and (\ref{80}), we can infer that for $j$ large
$$
\|u_{j}\|_{L^{p}+L^{q}}\leq \max\{\|u_{j}\|_{L^{q}(\Gamma_{j}^{c})},  \|u_{j}\|_{L^{p}(\Gamma_{j})} \} <\max\{\varepsilon^{1/q}, \varepsilon^{1/p} \}.
$$
\end{proof}

\section{proof of Theorem $1$}
\noindent
This section is devoted to the proof of the main result of this paper.\\
In order to obtain a solution to (\ref{P}), we will look critical points of the following functional 
$$
I(u):=\frac{1}{2}[u]_{H^{s}(\R^{N})}^{2}-\int_{\R^{N}} g(u(x)) dx
$$
constrained on
$$
\mathcal{N}=\left \{u\in  \mathcal{D}^{s,2}(\R^{N})\setminus\{0\}: J(u)=0  \right \}
$$
where 
$$
J(u):=\langle I'(u), u\rangle=[u]^{2}-\int_{\R^{N}} g'(u)u dx.
$$
By using the results in Section $2$, we can see that $I$ is well defined on $\mathcal{D}^{s,2}(\R^{N})$ and $I$ is a $C^{2}$-functional.

\begin{proof}(Proof of Theorem \ref{thm1})
We divide the proof in several steps.\\
{\bf Step 1}: $\mathcal{N}$ is a $C^{1}$-manifold. \\
By using $(g1)$ we have, for any $u\in \mathcal{N}$
\begin{align*}
2[u]^{2}-\int_{\R^{N}} (g'(u)u+g''(u)u^{2}) \,dx&=[u]^{2}-\int_{\R^{N}} g''(u)u^{2} \,dx \\
&=\int_{\R^{N}} (g'(u)u-g''(u)u^{2}) \,dx<0. 
\end{align*}

\noindent
{\bf Step 2}: Given $u \neq 0$, there exists a unique $t=t(u)>0$ such that $u t(u)\in \mathcal{N}$
and $I(ut(u))$ is the maximum for $I(tu)$ for $t\geq 0$.\\
Fix $u\neq 0$ and let 
$$
h(t):=I(tu)=\frac{t^{2}}{2}[u]^{2}-\int_{\R^{N}} g(tu(x)) dx
$$
for $t\geq 0$.\\
Then
$$
h'(t)=t[u]^{2}-\int_{\R^{N}} g'(tu(x))u \,dx
$$
and
$$
h''(t)=[u]^{2}-\int_{\R^{N}} g''(tu(x))u^{2} \,dx. 
$$
Let us observe that $t=0$ is a minimum for $h$ since $0=h(0)=h'(0)$ and $h''(0)>0$. Moreover, if $t_{0}>0$ is a critical point of $h$, then by $(g1)$, we obtain that $t_{0}$ is a maximum for $h$ because of 
$$
h''(t_{0})=[u]^{2}-\int_{\R^{N}} g''(t_{0} u(x))u^{2} dx=\int_{\R^{N}} \left(\frac{g'(t_{0}u)}{t_{0}}u-g''(t_{0}u)u^{2}\right) dx<0.
$$ 
By using $(g2)$, we get
\begin{align*}
h(t)&\leq \frac{t^{2}}{2}[u]^{2}-c_{0}t^{q}\int_{t|u|<1} |u|^{q} dx-c_{0}t^{p}\int_{t|u|>1} |u|^{p} dx\\
&\leq \frac{t^{2}}{2}[u]^{2}-c_{0}t^{p}\int_{t|u|>1} |u|^{p} dx\rightarrow -\infty  \quad \mbox{ as } t\rightarrow +\infty
\end{align*}
since $p>2$.
\noindent

{\bf Step 3}: The dependence of $t(u)$ on $u$ is of class $C^{1}$.\\
Let us define the following operator
$$
L(t,u):=t[u]^{2}-\int_{\R^{N}} g'(tu(x))u(x) dx
$$
for $(t,u)\in \R_{+} \times \mathcal{D}^{s,2}(\R^{N})$.
By Lemma \ref{lem2.1} we can see that $L\in C^{1}$ and if $(t_{0},u_{0})$ is a point such that $L(t_{0},u_{0})=0$ and $t_{0}, u_{0}\neq 0$, then by $(g_1)$
\begin{align*}
\frac{d}{dt}L(t_{0},u_{0})&=[u]^{2}-\int_{\R^{N}} g''(t_{0}u_{0}(x))u^{2}_{0} dx\\
&=\int_{\R^{N}} \left(\frac{g'(t_{0}u_{0})}{t_{0}}u_{0}-g''(t_{0}u_{0})u_{0}^{2}\right) \, dx<0.
\end{align*}
By invoking the Implicit Function Theorem we obtain that $u\mapsto t(u)$ is $C^{1}$ and
$$
\langle t'(u_{0}), v\rangle=\frac{\displaystyle{t_{0}^{2} \int_{\R^{N}} 2t_{0} (-\Delta)^{\frac{s}{2}}u_{0} (-\Delta)^{\frac{s}{2}}v- g'(t_{0}u_{0})v-g''(t_{0}u_{0})t_{0}u_{0}v \, \, dx }}{\displaystyle{\int_{\R^{N}} g''(t_{0}u_{0})t^{2}_{0}u^{2}_{0}-g'(t_{0}u_{0}) t_{0}u_{0}\, \,  dx}}
$$
where $t_{0}=t(u_{0})$.

\noindent
{\bf Step 4}: $\displaystyle{\inf_{v\in \mathcal{N}} [v]^{2}>0}$.\\
Let $v_{j}$ be a minimizing sequence in $\mathcal{N}$. We assume by contradiction that $v_{j}$ converges to zero in $\mathcal{D}^{s,2}(\R^{N})$. 
We set $t_{j}=[v_{j}]$, hence we can write $v_{j}=t_{j} u_{j}$ where $[u_{j}]=1$. 
Since the embedding $\mathcal{D}^{s,2}(\R^{N})\subset L^{p}+L^{q}$ is continuous, we deduce that $u_{j}$ is bounded in $L^{p}+L^{q}$. 
Then, by $(v_{j})\subset \mathcal{N}$, $t_{j}\rightarrow 0$ and Remark \ref{rem1}, we get
\begin{align*}
t_{j} &=\frac{1}{t_{j}}[v_{j}]^{2}=\int_{\R^{N}} g'(t_{j}u_{j})u_{j} dx\\
&\leq c_{1}t_{j}^{q-1}\int_{\{|v_{j}|\leq 1\}} |u_{j}|^{q} dx+c_{1}t_{j}^{p-1}\int_{\{|v_{j}|>1\}} |u_{j}|^{p} dx\\
&\leq c_{1}t_{j}^{q-1}\int_{\{|v_{j}|\leq 1\}} |u_{j}|^{q} dx+c_{1}t_{j}^{p-1}\int_{\{|u_{j}|>1\}} |u_{j}|^{p} dx \\
&\leq c_{1}t_{j}^{q-1}\int_{\{|u_{j}|\leq 1\}} |u_{j}|^{q} dx+c_{1}t_{j}^{q-1}\int_{\{|v_{j}|\leq 1\}\cap \{|u_{j}|> 1\}} \frac{|v_{j}|^{p}}{t_{j}^{q-p}} dx \\
&+c_{1}t_{j}^{p-1}\int_{\{|u_{j}|>1\}} |u_{j}|^{p} dx \\
&\leq c_{1}t_{j}^{q-1}\int_{\{|u_{j}|\leq 1\}} |u_{j}|^{q} dx+2c_{1}t_{j}^{p-1}\int_{\{|u_{j}|>1\}} |u_{j}|^{p} dx
\end{align*}
that is
\begin{equation}\label{tj}
1\leq c_{1}t_{j}^{q-2}\int_{\{|u_{j}|\leq 1\}} |u_{j}|^{q} dx+2c_{1}t_{j}^{p-2}\int_{\{|u_{j}|>1\}} |u_{j}|^{p} dx. 
\end{equation}
Taking into account (\ref{tj}), $(b)$ of Lemma \ref{lem2.1} and $t_{j}\rightarrow 0$, we have
$$
1\leq c'_{1} t_{j}^{q-2}+c'_{2}t_{j}^{p-2} \rightarrow 0 \mbox{ as } j \rightarrow \infty,
$$
that is a contradiction. Therefore $\inf_{v\in \mathcal{N}} [v]^{2}>0$.

\medskip
\noindent
Since we are looking for positive solutions to (\ref{P}), we can assume that $g(t)=0$ for $t \leq 0$.\\
{\bf Step 5}: The following infimum
\begin{equation}\label{inf}
m:=\inf_{v\in \mathcal{N}} I(v)
\end{equation}
is achieved.\\
Let $\{u_{j}\}\subset \mathcal{D}^{s,2}(\R^{N})$ be a minimizing sequence for (\ref{inf}).
Then, by $(g1)$, follows that
\begin{align}\label{3.3}
\Bigl(\frac{1}{2}-\frac{1}{\mu}\Bigr)[u_{j}]^{2}\leq \frac{1}{2}[u_{j}]^{2}-\int_{\R^{N}} g(u_{j}) dx=I(u_{j}),
\end{align}
that is $\{u_{j}\}$ is bounded in $\mathcal{D}^{s,2}(\R^{N})$.\\
We claim that $m>0$. Indeed, if $m=0$, the minimizing sequence $\{u_{j}\} \subset \mathcal{N}$ is such that $I(u_{j}) \rightarrow 0$, and by (\ref{3.3}) we deduce that $[u_{j}] \rightarrow 0$. This gives a contradiction because of  Step $4$.\\
Now, by using Theorem \ref{park}, we can note that $I(u^{*}_{j})\leq I(u_{j})$, where $u^{*}_{j}$ is the symmetric-decreasing rearrangement  of $u_{j}$. Moreover, by the boundedness of $\{u_{j}\}$, we can see that $\{u^{*}_{j}\}$ is bounded in $\mathcal{D}^{s,2}(\R^{N})$.\\
In virtue of Theorem \ref{compactthm}, the embedding $\tilde{\mathcal{D}}^{s,2}_{rad}(\R^{N})\subset L^{p}+L^{q}$ is compact, so, up to a subsequence, we may assume that $u^{*}_{j} \rightarrow u^{*}$ strongly in $L^{p}+L^{q}$, and weakly in $\mathcal{D}^{s,2}(\R^{N})$.\\
Let us observe that 
$
J(u^{*}_{j}) \leq J(u_{j})= 0
$
thanks to Theorem \ref{park},
so we don't know if $u_{j}^{*}$ belongs to the Nehari manifold $\mathcal{N}$. 
Then, for any $j\in \N$ there exists a unique $t_{j}\in [0,1]$ such that $t_{j} u^{*}_{j}\in \mathcal{N}$ and $t_{j}$ converges to some $t_{0}$.
By Step $3$ follows that $I(u_{j})$ is the maximum for $I(tu_{j})$ when $t \geq 0$, so we get
\begin{equation}\label{3.5}
0<m\leq I(t_{j} u^{*}_{j})\leq I(t_{j} u_{j})\leq I(u_{j}). 
\end{equation}
Since $I(u_{j})\rightarrow m$, we obtain $I(t_{j} u^{*}_{j}) \rightarrow m$. It is clear that $t_{0} \neq 0$. Otherwise $t_{j} u^{*}_{j}\rightarrow 0$ in $\mathcal{D}^{s,2}(\R^{N})$, and by (\ref{3.5}) we deduce that $m=0$, which provides a contradiction in virtue of Step $4$.\\
Now, we show that $u^{*}\neq 0$. If we suppose that $u^{*}=0$, then $u^{*}_{j} \rightarrow 0$ strongly in $L^{p}+L^{q}$.
Putting together $J(u^{*}_{j})\leq 0$, Remark \ref{rem1}, and by using H\"older inequality and Lemma \ref{lem2.1}, follows that by setting $u^{*}_{j}=u^{*}_{1j}+u^{*}_{2j}$ with $u^{*}_{1j}\in L^{p}$ and $u^{*}_{2j}\in L^{q}$ and $\Gamma_{j}=\{x\in \R^{N}: |u_{j}(x)|>1\}$
\begin{align*}
[u^{*}_{j}]^{2}&\leq \int_{\R^{N}} g'(u_{j}^{*})u_{1j}^{*} dx+\int_{\R^{N}} g'(u_{j}^{*})u_{2j}^{*} dx \\
&\leq \left[c_{1} \|u_{j}^{*}\|^{q/p'}_{L^{q}(\R^{N}-\Gamma_{j})}+c_{1} \|u_{j}^{*}\|^{p-1}_{L^{p}(\Gamma_{j})}\right] \|u_{1j}^{*}\|_{L^{p}(\R^{N})} \\
&+ \left[c_{1} \|u_{j}^{*}\|^{q-1}_{L^{q}(\R^{N}-\Gamma_{j})}+c_{1}|\Gamma_{j}|^{\frac{p-q}{pq}} \|u_{j}^{*}\|^{p-1}_{L^{p}(\Gamma_{j})}\right] \|u_{2j}^{*}\|_{L^{q}(\R^{N})}\\
&\leq c_{4} \|u_{j}^{*}\|_{L^{p}+L^{q}}
\end{align*}
that is $u^{*}_{j} \rightarrow 0$ in $\mathcal{D}^{s,2}(\R^{N})$ as $j \rightarrow \infty$. This and (\ref{3.5}) yield $m=0$, which is impossible because $m>0$.\\
Then, by using $I(t_{j} u^{*}_{j}) \rightarrow m$ and $t_{j}u^{*}_{j}\in \mathcal{N}$, and by applying Lemma \ref{lem2.3} and Lemma \ref{lem2.5}, we have
\begin{align}\label{cane}
m &=\lim_{j \rightarrow \infty} I(t_{j}u^{*}_{j}) \nonumber\\
&=\lim_{j \rightarrow \infty}  \int_{\R^{N}} \frac{1}{2} g'(t_{j}u^{*}_{j})t_{j}u^{*}_{j}-g(t_{j}u^{*}_{j}) dx \nonumber\\
&= \int_{\R^{N}} \frac{1}{2} g'(t_{0}u^{*})t_{0}u^{*}-g(t_{0}u^{*}) dx.
\end{align}
%where in the last passage we have used Lemma \ref{lem2.3} and Lemma \ref{lem2.4}.\\
Now, we argue by contradiction in order to prove that $t_{0}u^{*} \in \mathcal{N}$. If we 
assume that $t_{0}u^{*} \notin \mathcal{N}$, by $J(u^{*}_{j}) \leq J(u_{j})= 0$ and (\ref{3.5}) we have 
$$
I(t_{0}u^{*})\leq m
$$
and
$$
[t_{0}u^{*}]^{2}<\int_{\R^{N}} g'(t_{0}u^{*}) t_{0}u^{*} \, dx, 
$$
so we can find $t_{1}\in [0,1)$ such that $t_{1}t_{0}u^{*} \in \mathcal{N}$.
As a consequence
$$
m\leq I(t_{1}t_{0}u^{*})=\int_{\R^{N}} \frac{1}{2} g'(t_{1}t_{0}u^{*}) t_{1}t_{0}u^{*}-g(t_{1}t_{0}u^{*}) \,dx.
$$ 
In view of $(g1)$, the map
$$
t>0\mapsto \psi(t):=\int_{\R^{N}} \frac{1}{2} g'(tu) tu-g(tu) \, dx
$$
is strictly increasing, so by this and (\ref{cane}) we get
$$
m\leq \psi(t_{1}t_{0}u^{*})<\psi(t_{0}u^{*})= m,
$$
which is a contradiction.
This concludes the proof of Step $5$.\\
Then, by applying the Lagrange multiplier rule, there exists $\lambda\in \R$ such that
\begin{equation}\label{WF}
\langle I'(t_{0} u^{*}), \varphi \rangle=\lambda \langle J'(t_{0} u^{*}), \varphi \rangle
\end{equation}
for any $\varphi \in \mathcal{D}^{s, 2}(\R^{N})$.\\
By testing $\varphi=t_{0} u^{*}\in \mathcal{N}$ in (\ref{WF}), and keeping in mind that $\langle J'(u), u \rangle <0$ for all $u\in \mathcal{N}$ (see Step $1$), we deduce that $0=\langle I'(t_{0} u^{*}), t_{0} u^{*} \rangle=\lambda \langle J'(t_{0} u^{*}), t_{0} u^{*}\rangle$, that is $\lambda=0$ and $t_{0} u^{*}$ is a nontrivial solution to (\ref{P}).
Actually, by the strong maximum principle \cite{CS1}, we argue that $t_{0} u^{*}$ is positive.
\end{proof}


\begin{thebibliography}{777}

\bibitem{Adams}
R.A. Adams, 
{\it Sobolev spaces},
Academic Press, New York, 1975.


\bibitem{ASM}
C. Alves, A. S. Souto, and M. Montenegro,
{\it Existence of solution for two classes of elliptic problems in $\mathbb{R}^{N}$ with zero mass}, 
J. Differential Equations {\bf 252} (2012), no. 10, 5735--5750.

\bibitem{AFM}
A. Ambrosetti, V. Felli, and A. Malchiodi,
{\it Ground states of nonlinear Schr\"odinger equations with potentials vanishing at infinity}
J. Eur. Math. Soc. (JEMS) {\bf 7} (2005), no. 1, 117--144.

\bibitem{A}
V. Ambrosio,
{\it Periodic solutions for a pseudo-relativistic Schr\"odinger equation},
Nonlinear Anal. {\bf 120} (2015), 262--284.


\bibitem{A0}
V. Ambrosio,
{\it Ground states for superlinear fractional Schro\"odinger equations in $\R^{N}$},
Ann. Acad. Sci. Fenn. Math. {\bf 41} (2016), 745--756. 

\bibitem{A1}
V. Ambrosio, 
{\it Ground states solutions for a non-linear equation involving a pseudo-relativistic Schr\"odinger operator}, 
J. Math. Phys. {\bf 57} (2016), no. 5, 051502, 18 pp.

%\bibitem{A}
%V. Ambrosio,
%{\it Multiple solutions for a nonlinear scalar field equation involving the Fractional Laplacian},
%arXiv:1603.09538.  

\bibitem{AP1}
A. Azzollini and A. Pomponio,
{\it On a ``zero mass'' nonlinear Schr\"odinger equation},
Adv. Nonlinear Stud. {\bf 7} (2007), no. 4, 599--627.

%\bibitem{AP2}
%A. Azzolini and A. Pomponio
%{\it Compactness results and applications to some "zero mass'' elliptic problems},
%Nonlinear Anal. 69 (2008), no. 10, 3559--3576

\bibitem{BF}
V. Benci and D. Fortunato,
{\it Towards a unified field theory for classical electrodynamics},
 Arch. Ration. Mech. Anal. {\bf 173}, (2004), 379--414.

\bibitem{BGM}
V. Benci, C. R. Grisanti and A.M. Micheletti
{\it Existence and non existence of the ground state solution for the nonlinear Schr\"odinger equations with $V(\infty)=0$},
Topol. Methods Nonlinear Anal. {\bf 26} (2005), no. 2, 203--219. 

\bibitem{BGM2}
V. Benci, C. R. Grisanti and A.M. Micheletti,
{\it Existence of solutions for the nonlinear Schr\"odinger equations with $V (\infty) = 0$}, 
Contributions to non-linear analysis, 53--65, Progr. Nonlinear Differential Equations Appl., 66, Birkh\"auser, Basel, 2006.

\bibitem{BM}
V. Benci and A. M. Micheletti
{\it Solutions in exterior domains of null mass nonlinear field equations},
 Adv. Nonlinear Stud. {\bf 6} (2006), no. 2, 171--198.

\bibitem{BL1}
H. Berestycki and P.L. Lions, 
{\it Nonlinear scalar field equations. I. Existence of a ground state}, 
Arch. Rational Mech. Anal. {\bf 82} (1983), no. 4, 313--345.

\bibitem{BLof}
J. Berg and J. Lofstrom, 
{\it Interpolation Spaces}, 
Springer Verlag, Berlin Heidelberg New York, 1976.

%\bibitem{BL2}
%H. Berestycki and P.L. Lions, 
%{\it Nonlinear scalar field equations. 2. Existence of infinitely many solutions}, 
%Arch. Rational Mech. Anal. {\bf 82} (1983), no. 4, 347--375.


%\bibitem{CS}
%L.A. Caffarelli and L.Silvestre,
%{\it An extension problem related to the fractional Laplacian},
%Comm. Partial Differential Equations {\bf 32} (2007),1245--1260.

\bibitem{CS1}
X. Cabr{{\'e}} and Y.Sire,
{\it Nonlinear equations for fractional Laplacians I: regularity, maximum principles, and Hamiltonian estimates},
Ann. Inst. H. Poincar\'{e} Anal. Non Lin\'{e}aire {\bf 31} (2014), 23--53.

\bibitem{CW}
X. J. Chang and Z.Q. Wang, 
{\it Ground state of scalar field equations involving fractional Laplacian with general nonlinearity}, 
Nonlinearity {\bf 26} (2013), 479--494.

%\bibitem{CO}
%Y. Cho and T. Ozawa,
%{\it Sobolev inequalities with symmetry},
%Commun. Contemp. Math. 11 (2009), no. 3, 355--365.


\bibitem{DPV}
E. Di Nezza, G. Palatucci, E. Valdinoci, 
{\it Hitchhiker's guide to the fractional Sobolev spaces}, 
Bull. Sci. math. {\bf 136} (2012), 521--573.

\bibitem{DPPV}
S. Dipierro, G. Palatucci, and E. Valdinoci, 
{\it Existence and symmetry results for a Schr\"odinger type problem involving the fractional Laplacian}, 
Le Matematiche (Catania) {\bf 68} (2013), 201--216.


%\bibitem{FL}
%R. Frank and E. Lenzmann 
%{\it Uniqueness and nondegeneracy of ground states for $(-\Delta)^{s} Q + Q - Q^{\alpha +1}=0$ in $\R$},
%e-print arXiv:1009.4042.

\bibitem{FFV}
M. M. Fall, M. Fethi, and E. Valdinoci,
{\it Ground states and concentration phenomena for the fractional Schr\"odinger equation},
 Nonlinearity {\bf 28} (2015), no. 6, 1937--1961.


\bibitem{FQT}
P. Felmer, A. Quaas and J.Tan,
{\it Positive solutions of the nonlinear {S}chr{\"o}dinger equation with the fractional {L}aplacian},
Proc. Roy. Soc. Edinburgh Sect. A {\bf 142} (2012), 1237--1262.


%\bibitem{FJLL}
%J.Fr{{\"o}}hlich, B.Jonsson, G.Lars and E.Lenzmann,
%{\it Boson stars as solitary waves},
%Comm. Math. Phys. {\bf 274} (2007), 1--30.

\bibitem{G}
B. Gidas, 
{\it Bifurcation Phenomena in Mathematical Physics and Related Topics},
 Proceedings of the NATO Advanced Study Institute held at Carg{{\`e}}se, June 24-July 7, 1979 (C. Bardos and D. Bessis eds.), NATO Advanced Study Institute Series. Ser. C, Mathematical and Physical Sciences 54, 1980.
 
 \bibitem{GNN} 
 B. Gidas, W.M. Ni and L. Nirenberg, 
 {\it Symmetry and related properties via the maximum principle},
  Comm. Math. Phys. {\bf 68} (1979), no. 3, 209--243.

\bibitem{GS}
S. Goyal, K. Sreenadh,
{\it Existence of multiple solutions of $p$-fractional Laplace operator with sign-changing weight function},
Adv. Nonlinear Anal. {\bf 4} (2015), no. 1, 37--58.


%\bibitem{HIT}
%J. Hirata, N. Ikoma and K. Tanaka,
%{\it Nonlinear scalar field equations in $\R^{N}$: mountain pass and symmetric mountain pass approaches},
%Topol. Methods Nonlinear Anal. {\bf 35} (2010), 253--276.

\bibitem{Laskin1}
N. Laskin, 
{\it Fractional quantum mechanics and L\'evy path integrals}, 
Phys. Lett. A {\bf 268} (2000), no. 4-6, 298--305,
DOI 10.1016/S0375-9601(00)00201-2. MR1755089 (2000m:81097)

\bibitem{Laskin2}
N. Laskin,
{\it Fractional Schr\"odinger equation}, 
Phys. Rev. E (3) {\bf 66} (2002), no. 5, 056108, 7, DOI 10.1103/Phys-
RevE.66.056108. MR1948569 (2003k:81043)


\bibitem{LL}
E. H. Lieb and M. Loss,
{\it Analysis}, Graduate Studies in Mathematics 14, American Mathematical Society, Providence, 2001. 
%\bibitem{LY1}
%E.H.Lieb and H.T. Yau,
%{\it The {C}handrasekhar theory of stellar collapse as the limit of quantum mechanics},
%Comm. Math. Phys., {\bf 112} (1987),  147--174.

\bibitem{MBR}
G. Molica Bisci and V. Radulescu,
{\it Ground state solutions of scalar field fractional Schr\"odinger equations},
Calc. Var. Partial Differential Equations {\bf 54} (2015), no. 3, 2985--3008.

\bibitem{MBRS}
G. Molica Bisci, V. Radulescu, R. Servadei, 
{\it Variational methods for nonlocal fractional problems},  Encyclopedia of Mathematics and its Applications, 162. Cambridge University Press, Cambridge, 2016.

\bibitem{Park}
Y. J. Park,
{\it Fractional Polya-Szeg\"o inequality},
J. Chungcheong Math. Soc. {\bf 24} (2011), no. 2, 267--271.

\bibitem{Secchi}
S. Secchi, 
{\it On fractional Schr\"odinger equations in $\R^{N}$ without the Ambrosetti-Rabinowitz condition},
2012, http://arxiv.org/abs/1210.0755.

\bibitem{Struwe}
M. Struwe, 
{\it Variational methods. Applications to nonlinear partial differential equations and Hamiltonian systems}, Fourth edition. 34. Springer-Verlag, Berlin, 2008. xx+302 pp.








\end{thebibliography}
\end{document}